\newtheorem{proposition}{Proposition}
\newtheorem{corollary}{Corollary}
\newtheorem{definition}{Definition}
\newtheorem{theorem}{Theorem}
\newtheorem{example}{Example}
\renewcommand*{\backref}[1]{}
\renewcommand*{\backrefalt}[4]{%
    \ifcase #1 (Not cited.)%
    \or        (Cited on page~#2.)%
    \else      (Cited on pages~#2.)%
    \fi}
\newcommand{\C}{\mathbf{C}}
\newcommand{\Grp}{\mathbf{Grp}}
\newcommand{\Set}{\mathbf{Set}}
\newcommand{\Act}{\mathbf{Act}}
\newcommand{\Mag}{\mathbf{Mag}}
\newcommand{\Smg}{\mathbf{Smg}}
\newcommand{\coker}{\mathrm{coker}}
\newcommand{\Aut}{\mathrm{Aut}}
\begin{document}

\title[On semibiproducts of magmas and semigroups]{On  semibiproducts of magmas and semigroups}


\author{N. Martins-Ferreira}
\address[Nelson Martins-Ferreira]{Instituto Politécnico de Leiria, Leiria, Portugal}
\thanks{ }
\email{martins.ferreira@ipleiria.pt}

\begin{abstract}

A generalization to the categorical notion of biproduct, called semibiproduct, which in the case of groups covers classical semidirect products, has recently been analysed in the category of monoids with surprising results in the classification of weakly Schreier extensions. The purpose of this paper is to extend the study of semibiproducts to the category of semigroups. However, it is observed that  a further analysis into the category of magmas is required in attaining a full comprehension on the subject. Indeed, although there is a subclass of magma-actions, that we call representable, which classify all semibiproducts of magmas whose behaviour is similar to semigroups, it is nevertheless more general than the  subclass of associative magma-actions.


\keywords{semibiproduct, biproduct, semi-direct product, groups, monoids, semigroups, pseudo-action, correction factor, factor system, factor set, extension, perturbation system}


\end{abstract}

\maketitle

\date{Received: date / Accepted: date}

\today

\tableofcontents

\section{Introduction}

An \emph{Ab}-category \cite{MacLane} is a category whose hom-sets are abelian groups. A \emph{biproduct} in an \emph{Ab}-category, introduced by MacLane in his book \emph{Homology} \cite{Homology}, consists of  a diagram
\begin{equation}
\label{diag: biproduct of comm semigroups intro}
\xymatrix{X\ar@<-.5ex>[r]_{i_1} & A\ar@<-.5ex>@{->}[l]_{p_1}\ar@<.5ex>[r]^{p_2} & B \ar@{->}@<.5ex>[l]^{i_2}}
\end{equation}
in which the conditions
\begin{eqnarray}
i_1p_1+i_2p_2=1_A,\quad
p_2i_2=1_B,\quad
p_1i_1=1_{X}\label{eq:biproduct eqs},
\end{eqnarray}
 are satisfied.
 In the \emph{Ab}-category of abelian groups, a biproduct is nothing but a direct product (or a direct sum) and every $A$ is isomorphic to
$X\times B$ which is further isomorphic to $X+B$. This is no longer true in the category of groups
where furthermore there is the issue of  the component-wise addition of group homomorphisms not being necessarily a group homomorphism. In spite of that, the above definition of biproduct {could}, in principle,  be interpreted in the category of groups by requiring that the map associating the element $i_1p_1(a)+i_2p_2(a)$ to every element $a$ in $A$ {would have to be} a group homomorphism and moreover {it would have to be} \emph{the} identity homomorphism on $A$. However, this approach does not take us too far from the abelian world. On the other hand, if dropping the requirement that $p_1$ is a group homomorphism but rather allowing it to be a set-theoretical map (necessarily preserving the neutral element) then we get precisely a semidirect product of groups and suddenly all the classical theory of group split extensions  is at hand (see e.g. \cite{Northcott}, see also Sections \ref{sec: special case of groups} and \ref{sec:discussion}). On the contrary, but as expected, the situation in monoids is more subtle. For example, if the retraction $p_1$ is not necessarily a monoid homomorphism then it does not need to preserve the neutral element. Furthermore, in opposition to its behaviour in groups, the map $p_1$ is not uniquely determined. It is thus surprisingly to observe (\cite{NMF.21}) how  
 the idea  of considering $p_1$ as a map rather than as a homomorphism  is successfully pushed forward into the category of monoids  at the expense of introducing  the new conditions $p_2i_1=0$ and $p_1i_2=0$ to the ones displayed in (\ref{eq:biproduct eqs}). The result is a new characterization of weakly Schreier split extensions of monoids (see also \cite{PFaul,PFaul1}).
 An important observation is that the widely studied case of Schreier split extensions (\cite{DB.NMF.AM.MS.13}) corresponds precisely to the case in which the map $p_1$ is uniquely determined.    The main distinction between our approach (\cite{NMF.21}) and the one taken in \cite{PFaul}  is that a weakly Schreier split extension is classified by a pseudo-action with a correction system thus presenting the object $A$ as a subset of  $X\times B$ rather than as a quotient from it, further details on Section~\ref{sec:discussion}. Finally, there is another particularity which is worth noting when comparing the classical approach to extension theory with the one resulting from extracting the sequence $(i_1,p_2)$ out of diagram  (\ref{diag: biproduct of comm semigroups intro}), namely that  even-though the retraction $p_1$ and the section $i_2$  may fail to be homomorphisms they are nevertheless part of the structure.

At this point it appears  hard to go beyond the context of monoids since the two conditions $p_2i_1=0$ and $p_1i_2=0$, which are easily derived in the case of groups, are essential in order to have the sequence $(i_1,p_2)$ as a monoid extension.
The purpose of this paper is to show that not only it is possible to consider the case of monoids where the two conditions $p_2i_1=0$ and $p_1i_2=0$ are not present, but more remarkably its study can be pushed forward into the context of magmas and semigroups thus leading to some curious results. 

In summary, the main difference between a semibiproduct and a classical biproduct (as already seen in \cite{NMF.21}) is that $p_1$ and $i_2$ are not necessarily homomorphisms. The fact that $p_1$ and $i_2$ are allowed to be maps, not even preserving the neutral element, gives the possibility of interpreting it in the category of semigroups where certainly it is no longer expected that the sequence $(i_1,p_2)$ is an extension. Instead, we find $i_1$ as a kind of equaliser for the homomorphism $p_2$ and the map $p_2i_1p_1$ (Proposition \ref{prop:ker}), whereas $p_2$ is a kind of coequalizer for the identity morphism $1_A$ and the map $i_2p_2$ (Proposition \ref{prop:coker}). As a consequence, if $p_2i_1=0$ then $p_2=\coker(i_1)$ and $i_1=\ker(p_2)$, further details on Section \ref{Sec: kernels and cokernels}. 

While working in the category of magmas rather than at the level of semigroups we get the bigger picture which clarifies some peculiarities that are observed on the passage from monoids to semigroups.

Let us start right away with the more general case so that later on we may observe its particularities. 


\section{The category of magma-actions}

In this section we introduce a category which turns out to be equivalent to the category of semibiproducts of magmas. It is presented as an internal structure in the category of sets and called the category of magma-actions.
Although at a first glance the category of magma-actions is not even vaguely compared with the category of classical group or monoid actions, it will nevertheless be denoted as $\Act$. The reason will become apparent as soon as a group action is seen to be a very particular case of a magma action.

\begin{definition}\label{def: magma-action}
An object in $\Act$ is called a magma-action and it consists of a six-tuple $(X,B,\theta,\varphi,h,t)$ in which $X$ and $B$ are sets, $\theta,\varphi,h,t$ are maps with domain and codomain as displayed
\begin{equation}
\xymatrix{X\times B\times X\times B\ar[r]^(.7){\varphi} & X \ar@<.5ex>[r]^{h} & B \ar@<.5ex>[l]^{t} & B\times B\ar[l]_(.6){\theta}}
\end{equation}
such that for every $x,x'\in X$
\begin{equation}\label{h is morphism}
h(\varphi(x,h(x),x',h(x')))=\theta(h(x),h(x'))
\end{equation}
and if defining a set $R\subseteq X\times B$ as 
\begin{equation}\label{eq: def of R}
R=\{(x,y)\mid \varphi(x,h(x),t(b),b)=x,\, \theta(h(x),b)=b\}
\end{equation}
 then the following conditions are satisfied  for all $x,x'\in X$ and $b,b'\in B$
\begin{eqnarray}
(x,h(x)) \in  R\label{h is well defined}\\
(t(b),b)\in R\label{t is well defined}\\
(x,b),(x',b') \in  R &\Rightarrow& (\varphi(x,b,x',b'),\theta(b,b'))\in R.\label{R is well defined as submagma}
\end{eqnarray}
\end{definition}

For example, if we let $(B,\theta)$ be a magma on the set $B$ and put $X=B$, $h=t=1_B$ and $\varphi(x,b,x',b')=\theta(x,b')$, then the six-tuple $(X,B,\theta,\varphi,h,t)$ is a magma-action if and only if $\theta(b,b)=b$ for every $b\in B$ and we have $R\cong B$.

A morphism in $\Act$, say from an object $(X,B,\theta,\varphi,h,t)$ to an object $(X',B',\theta',\varphi',h',t')$, is a pair of maps $(f,g)$ such that the following diagram is commutative.
\begin{equation*}
\xymatrix{X\times B\times X\times B\ar[d]_{f\times g\times f\times g} \ar[r]^(.7){\varphi} & X\ar[d]_{f} \ar@<.5ex>[r]^{h} & B \ar@<.5ex>[l]^{t}\ar[d]^{g} & B\times B\ar[l]_(.6){\theta} \ar[d]^{g\times g}\\
X'\times B'\times X'\times B'\ar[r]^(.7){\varphi'} & X \ar@<.5ex>[r]^{h'} & B' \ar@<.5ex>[l]^{t'} & B'\times B'\ar[l]_(.6){\theta'}
}
\end{equation*} 

As we will see (Theorem \ref{thm:Act=Sbp(Mag)}) there is an equivalence of categories between $\Act$ and  the category of semibiproducts of magmas, which will be specialized to an equivalence between a full-subcategory of $\Act$ and the category of semibiproducts of semigroups.

For the moment let us observe that $R\subseteq X\times B$ has a magma structure $(R,+)$ determined by $\varphi$ and $\theta$ as
\begin{equation}\label{eq: magma operation on R}
    (x,b)+(x',b')=(\varphi(x,b,x',b'),\theta(b,b')).
\end{equation}
Furthermore, a magma structure can be defined on $X$ as 
\[x+x'=\varphi(x,h(x),x',h(x'))\]
and condition $(\ref{h is morphism})$ is saying that $h\colon{(X,+)\to (B,\theta)}$ is a morphism of magmas. Moreover, due to conditions $(\ref{h is well defined})$ and $(\ref{t is well defined})$ it is easy to see that the diagram
\begin{equation}\label{diag: semibiproduct of a magma action}
\xymatrix{X\ar@<-.5ex>[r]_{\langle 1,h\rangle} & R\ar@<-.5ex>@{->}[l]_{\pi_X}\ar@<.5ex>[r]^{\pi_B} & B \ar@{->}@<.5ex>[l]^{\langle t,1\rangle}}
\end{equation}
 is well defined. Note that $\pi_B$ and $\langle 1,h\rangle$ are morphisms of  magmas whereas $\pi_X$ and $\langle t,1\rangle$ are set-theoretical maps. This shows the existence of a functor from the category of magma-actions to the category of semibiproducts of magmas as detailed in the following section.

\section{Magma-actions and semibiproducts of magmas}

A semibiproduct of magmas is a diagram 
\begin{equation}
\label{diag: semibiproduct of magmas}
\xymatrix{X\ar@<-.5ex>[r]_{k} & A\ar@<-.5ex>@{..>}[l]_{q}\ar@<.5ex>[r]^{p} & B \ar@{->}@<.5ex>@{..>}[l]^{s}}
\end{equation}
in which $X$, $A$ and $B$ are magmas, $k$ and $p$ are magma morphisms while $s$ and $q$ are set theoretical maps, moreover, the following  conditions are satisfied
\begin{equation}\label{eqs: semibiproduct}
kq+sp=1_A,\quad ps=1_B, \quad qk=1_X.
\end{equation}

We may form the category of semibiproducts of magmas by specifying the morphisms between semibiproducts of magmas as triples of magma morphisms, say $(f_1,f_2,f_3)$,  as displayed in the following diagram
\begin{equation}
\label{diag: morphism of semi-biproducts}
\xymatrix{X\ar@<-.5ex>[r]_{k}\ar[d]_{f_1} & A\ar@{->}@<-.0ex>[d]^{f_2}\ar@<-.5ex>@{..>}[l]_{q}\ar@<.5ex>[r]^{p} & B\ar@{->}[d]^{f_3} \ar@{->}@<.5ex>@{..>}[l]^{s}\\
X\ar@<-.5ex>[r]_(.5){k'} & A' \ar@<-.5ex>@{..>}[l]_(.5){q'}\ar@<.5ex>[r]^(.5){p'} & B \ar@{->}@<.5ex>@{..>}[l]^(.5){s'}}
\end{equation}
and such that the whole diagram,  dotted arrows included, is commutative.

\begin{theorem}\label{thm:Act=Sbp(Mag)}
The category of magma-actions is equivalent to the category of semibiproducts of magmas.
\end{theorem}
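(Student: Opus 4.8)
The plan is to construct two functors, $\Phi\colon\Act\to\Sbp(\Mag)$ and $\Psi\colon\Sbp(\Mag)\to\Act$, and to exhibit natural isomorphisms $\Psi\Phi\cong 1_{\Act}$ and $\Phi\Psi\cong 1_{\Sbp(\Mag)}$. The functor $\Phi$ is essentially already described in the excerpt: a magma-action $(X,B,\theta,\varphi,h,t)$ is sent to the semibiproduct (\ref{diag: semibiproduct of a magma action}), with $A=(R,+)$ the submagma of $X\times B$ defined by (\ref{eq: magma operation on R}), $k=\langle 1,h\rangle$, $p=\pi_B$, $q=\pi_X$ and $s=\langle t,1\rangle$; on morphisms $(f,g)$ one takes $f_1=f$, $f_3=g$ and $f_2$ the restriction of $f\times g$ to $R\to R'$. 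First I would check that this is well defined: that $\Phi$ of an object really satisfies the semibiproduct axioms (\ref{eqs: semibiproduct}) — here $qk=1_X$, $ps=1_B$ are immediate, and $kq+sp=1_A$ amounts on $(x,b)\in R$ to $\varphi(x,h(x),t(b),b)=x$ and $\theta(h(x),b)=b$, which is exactly membership in $R$ — and that $f_2$ lands in $R'$ and makes diagram (\ref{diag: morphism of semi-biproducts}) commute, dotted arrows included, which follows from the commuting squares defining a morphism in $\Act$.

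Next I would build $\Psi$. Given a semibiproduct (\ref{diag: semibiproduct of magmas}) of magmas, I set the action data on the sets $X$ and $B$ by
\begin{equation*}
\varphi(x,b,x',b') = q\bigl(k(x)\cdot s(b) + k(x')\cdot s(b')\bigr)\quad\text{for the summands,}
\end{equation*}
but more precisely one reads off $\theta=$ the operation of $B$, $h=pk$ (which is $1_X$ followed into $B$; in fact one must be a little careful, $h$ should be the map $X\to B$ given by $p\circ k$), $t = q\circ s$, and $\varphi$ extracted from the $A$-operation via $q$ applied to $k(x)\cdot s(b) + k(x')\cdot s(b')$ — the point being that $k(x)\cdot s(b)$ recovers the generic element of $A$ because $kq+sp=1_A$ forces every $a\in A$ to equal $k(q(a))\cdot s(p(a))$. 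Then I verify (\ref{h is morphism}), the definition of $R$ in (\ref{eq: def of R}), and conditions (\ref{h is well defined})--(\ref{R is well defined as submagma}); the natural candidate is that $R$ is the image of $A$ under $\langle q,p\rangle\colon A\to X\times B$, and conditions (\ref{h is well defined}), (\ref{t is well defined}) say this image contains $(x,pk(x))$ and $(qs(b),b)$, while (\ref{R is well defined as submagma}) says it is closed under the induced operation — all of which follow from $k$, $p$ being morphisms and the identity $kq+sp=1_A$.

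The main obstacle I anticipate is the bookkeeping around the map $q$ and the set $R$: unlike the group or monoid case, $q$ is only a set map and need not be injective, so $A$ is genuinely presented as (isomorphic to) the subset $R\subseteq X\times B$ rather than a quotient, and I must show that $\langle q,p\rangle\colon A\to R$ is a bijection of magmas. Injectivity is exactly $kq+sp=1_A$ (if $q(a)=q(a')$ and $p(a)=p(a')$ then $a=k(q(a))+s(p(a))=k(q(a'))+s(p(a'))=a'$); surjectivity onto $R$ and the fact that the transported operation on $R$ is precisely (\ref{eq: magma operation on R}) require checking that $\varphi,\theta$ as defined above do reproduce $+$ on $A$ after identification. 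Once $\langle q,p\rangle$ is established as an iso, the unit and counit of the equivalence are built from it: $\Phi\Psi$ applied to a semibiproduct returns the isomorphic copy on $R$, with comparison iso $\langle q,p\rangle$, and this is readily seen to commute with $k,p,q,s$ on the nose and to be natural; $\Psi\Phi$ applied to a magma-action returns the same six-tuple back (since starting from $R$ one recovers $\theta$, $h=pk=$ first projection into $B$ of $\langle 1,h\rangle$, etc.), giving $\Psi\Phi=1_{\Act}$ on the nose or up to the evident identity. I would organise the write-up as: (1) define $\Phi$ and check well-definedness on objects and morphisms; (2) define $\Psi$ likewise; (3) prove $\langle q,p\rangle$ is an iso of magmas over the span; (4) assemble the natural isomorphisms and conclude.
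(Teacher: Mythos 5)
Your proposal is correct and follows essentially the same route as the paper: the same formulas $\theta(b,b')=b+b'$, $h=pk$, $t=qs$, $\varphi(x,b,x',b')=q((k(x)+s(b))+(k(x')+s(b')))$ in one direction, the semibiproduct on $R$ in the other, and the comparison isomorphism $\beta=\langle q,p\rangle$ with inverse $\alpha(x,b)=k(x)+s(b)$. The only blemish is notational (your mixed use of $\cdot$ and $+$ for the operation of $A$), which does not affect the argument.
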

\begin{proof}
From a semibiproduct of magmas such as the one displayed in $(\ref{diag: semibiproduct of magmas})$ and satisfying the equations $(\ref{eqs: semibiproduct})$, we obtain a magma action defined as
\begin{eqnarray*}
\theta(b,b')&=&b+b'\\
\varphi(x,b,x',b')&=&q((k(x)+s(b))+(k(x')+s(b')))\\
h&=&pk\\
t&=&qs.
\end{eqnarray*}
It is a straightforward calculation to check that it is a magma-action. The relevant observations are as follows:
\begin{eqnarray}
k(x)+sh(x)=k(x)+spk(x)=kq(k(x))+sp(k(x))=k(x)\\
kt(b)+s(b)=kqs(b)+s(b)=kq(s(b))+sp(s(b))=s(b)
\end{eqnarray}
from which by applying $p$ we get
\begin{eqnarray}
h(x)+h(x)=h(x)\\
ht(b)+b=b
\end{eqnarray}
for every $x\in X$ and $b\in B$ so that $$R=\{(x,b)\mid q(k(x)+s(b))=x,\, h(x)+b=b\}.$$
If we allow $k(x)+s(b)$ to be denoted by $a\in A$ and $k(x')+s(b')$ to be denoted by $a'\in A$ while  assuming that both $(x,b)$ and $(x',b')$ are elements in $R$ then  $(q(a+a'),p(a+a'))$ is an element in $R$ as well.  Indeed, since $kq(a+a')+sp(a+a')=a+a'$ we have
\[q(kq(a+a')+sp(a+a'))=q(a+a')\]
and similarly
\[hq(a+a')+p(a+a')=p(kq(a+a')+sp(a+a'))=p(a+a').\]
Conversely, given a magma action we obtain a semibiproduct of magmas as illustrated in diagram $(\ref{diag: semibiproduct of a magma action})$. Note that the set $R$ defined as in $(\ref{eq: def of R})$ consists precisely of those pairs $(x,b)\subseteq X\times B$ for which $$(x,b)=(x,h(x))+(t(b),b)$$ and that $(R,+)$ is a submagma of $X\times B$ which is endowed with a magma operation induced by the maps $\varphi$ and $\theta$. It is routine calculation to check that morphisms of magma-actions correspond to morphisms of semibiproducts and vice-versa. Let us simply observe that the morphism $f_2$ in the diagram $(\ref{diag: morphism of semi-biproducts})$ is completely determined by the morphisms $f_1$ and $f_3$ as
\[f_2(a)=f_2(kq(a)+sp(b))=f_kq(a)+f_2sp(a)=k'f_1q(a)+s'f_3p(a).\]
We have thus a functorial correspondence between magma actions and semibiproducts of magmas. The correspondence is a natural isomorphism on the side of semibiproducts and the identity on the side of magma-actions. The relevant diagram is
\begin{equation}
\label{diag:semi-biproduct bijectio magmas}
\xymatrix{X\ar@<-.5ex>[r]_{k}\ar@{=}[d] & A\ar@{->}@<-.5ex>[d]_{\beta}\ar@<-.5ex>@{..>}[l]_{q}\ar@<.5ex>[r]^{p} & B\ar@{=}[d] \ar@{->}@<.5ex>@{..>}[l]^{s}\\
X\ar@<-.5ex>[r]_(.5){\langle 1,h\rangle} & R\ar@{->}@<-.5ex>[u]_{\alpha} \ar@<-.5ex>@{..>}[l]_(.5){\pi_{X}}\ar@<.5ex>[r]^(.5){\pi_B} & B \ar@{->}@<.5ex>@{..>}[l]^(.5){\langle t,1 \rangle}}
\end{equation}
in which $\alpha(x,b)=k(x)+s(b)$ and $\beta(a)=(q(a),p(a))$.

\end{proof}

The important particular case of semidirect products of unitary magmas, as considered in  \cite{GranJanelidzeSobral}, is obtained if we restrict our attention to those semibiproducts  of the form $(\ref{diag: semibiproduct of magmas})$ in which $pk=0$, $qs=0$, the map $s$ is  a morphism of magmas and the Schreier condition $q(k(x)+s(b))=x$ is satisfied for all $x\in X$ and $b\in B$. 

\begin{corollary}\label{thm: corollary semidirect product of unitary magmas}
The category of semidirect products of unitary magmas is equivalent to the full subcategory of $\Act$ consisting of those magma-actions $(X,B,\theta,\varphi,h,t)$ for which the following conditions are satisfied
\begin{eqnarray}
    h(x)=0\in B\\
    t(b)=0\in X\\
    \theta(0,b)=b=\theta(b,0)\\
    \varphi(0,0,x,b)=x\\
    \varphi(x,b,0,0)=x\\
    \varphi(x,0,0,b)=x\\
    \varphi(0,b,0,b)=0
\end{eqnarray}
for all $x\in X$ and $b\in B$.  
\end{corollary}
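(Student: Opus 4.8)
The plan is to derive the statement by restricting the equivalence of Theorem~\ref{thm:Act=Sbp(Mag)} to full subcategories on both sides. Recall that under that equivalence a semibiproduct of magmas as in $(\ref{diag: semibiproduct of magmas})$ goes to the magma-action with $\theta(b,b')=b+b'$, $h=pk$, $t=qs$ and $\varphi(x,b,x',b')=q((k(x)+s(b))+(k(x')+s(b')))$, while a magma-action goes to the semibiproduct $(\ref{diag: semibiproduct of a magma action})$, where $A=R$ carries the operation $(\ref{eq: magma operation on R})$, $k=\langle 1,h\rangle$, $p=\pi_B$, $q=\pi_X$ and $s=\langle t,1\rangle$. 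Since any equivalence of categories restricts to an equivalence between a full subcategory on one side and the full subcategory cut out on the other, and since --- as noted just before the corollary --- the category of semidirect products of unitary magmas is the full subcategory of semibiproducts of magmas in which $X,A,B$ are unitary, $pk=0$, $qs=0$, $s$ is a morphism of magmas, and $q(k(x)+s(b))=x$, it suffices to check that this class of semibiproducts corresponds, under the above dictionary, exactly to the class of magma-actions satisfying the displayed identities.

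I would verify this correspondence one condition at a time. From $h=pk$ and $t=qs$ the equations $pk=0$, $qs=0$ become $h(x)=0$ and $t(b)=0$. As $\theta$ is, by construction, the operation of $B$, unitarity of $B$ is exactly $\theta(0,b)=b=\theta(b,0)$. On $A=R$ with the operation $(\ref{eq: magma operation on R})$ the pair $(0,0)$ is a two-sided unit precisely when $\varphi(0,0,x,b)=x$ and $\varphi(x,b,0,0)=x$, and once these hold one checks directly that $k=\langle 1,h\rangle$ maps $0$ to $(0,0)$, that $p=\pi_B$ maps $(0,0)$ to $0$, and that the induced operation $\varphi(x,0,x',0)$ on $X$ makes $X$ unitary --- so unitarity of $X$ and $A$, and preservation of the unit by $k,p$, need not be imposed separately. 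Next, writing $k(x)=(x,0)$ and $s(b)=(0,b)$ in $R$, one gets $q(k(x)+s(b))=\pi_X(\varphi(x,0,0,b),b)=\varphi(x,0,0,b)$, so the Schreier identity reads $\varphi(x,0,0,b)=x$, which in turn forces $R=X\times B$. Finally $s=\langle t,1\rangle$ (with $t=0$) being a morphism of magmas says $(\varphi(0,b,0,b'),\theta(b,b'))=(0,\theta(b,b'))$ for all $b,b'$, i.e. $\varphi(0,b,0,b')=0$, which is the remaining displayed identity.

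The step requiring the most care is ensuring this reads as a genuine biconditional at the level of objects, not merely a one-way translation. In the direction from magma-actions to semibiproducts one must, assuming the displayed identities, actually confirm that $(\ref{diag: semibiproduct of a magma action})$ is a semidirect product of unitary magmas: exhibit $(0,0)$ as the unit of $R$, note $R=X\times B$ so that $\pi_X,\pi_B$ and $\langle 0,1\rangle$ are the expected maps, and check that $\langle 0,1\rangle$ is a magma morphism satisfying the Schreier condition. In the reverse direction one evaluates the associated $\varphi$ on the relevant arguments; the one computation that is not immediate is $\varphi(0,b,0,b')=q((k(0)+s(b))+(k(0)+s(b')))=q(s(b)+s(b'))=q(s(b+b'))=qs(b+b')=0$, which uses in succession that $k$ and $s$ preserve the unit, that $0_A$ is neutral, that $s$ is a morphism, and that $qs=0$; the identities $\varphi(0,0,x,b)=x$, $\varphi(x,b,0,0)=x$ and $\varphi(x,0,0,b)=x$ are obtained the same way, invoking the Schreier condition for the last. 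Once both directions are established the equivalence of categories is immediate, because a morphism of semidirect products of unitary magmas is precisely a morphism of the underlying semibiproducts --- we are restricting to a full subcategory --- and these correspond bijectively to morphisms of magma-actions by Theorem~\ref{thm:Act=Sbp(Mag)}.
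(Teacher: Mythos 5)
Your proof is correct and follows essentially the same route as the paper's: restrict the equivalence of Theorem~\ref{thm:Act=Sbp(Mag)} to the full subcategories on both sides and match the defining conditions one by one ($pk=0$ with $h=0$, $qs=0$ with $t=0$, the Schreier condition with $\varphi(x,0,0,b)=x$, the requirement that $s$ be a magma morphism with the last identity, and unitarity with the remaining ones) --- the paper merely asserts these equivalences where you actually verify them in both directions. The only point worth flagging is that you have read the final displayed condition as $\varphi(0,b,0,b')=0$ for all $b,b'$ rather than the diagonal case $\varphi(0,b,0,b)=0$ literally printed in the statement; since the paper's own proof declares that condition to be equivalent to $s$ being a morphism of magmas, your reading is the intended one, and your argument is sound for that (corrected) statement.
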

\begin{proof}
The last condition is equivalent to the fact that the map $s$ is a morphim of magmas; condition $\varphi(x,0,0,b)=x$ is equivalent to the Schreier condition $q(k(x)+s(b))=x$; the remaining conditions are needed because we are restricting our attention to the category of unitary magmas.
\end{proof}

We remark that there are two reasons for the set $R$ to be a proper subset of $X\times B$, namely that the map $h$ is not always compatible with $\theta$ in the sense that $\theta(h(x),b)=b$, or that the Schreier condition, which can be reformulated as $\varphi(x,h(x),t(b),b)=x$, does not hold for all pairs $(x,b)\in X\times B$. Recall that the set $R$ is defined as in (\ref{eq: def of R}) and use the diagram (\ref{diag:semi-biproduct bijectio magmas}) to translate from magma-actions to semibiproducts and vice-versa. In the case of semidirect products of unitary magmas we have $R$ bijective to $ X\times B$.

\section{Representable magma-actions}\label{sec: representable actions}

Let us now turn our attention to the category whose objects are semi\-biproducts of semigroups rather than semi\-biproducts of magmas. Since the category of semigroups, $\Smg$, is a full subcategory of $\Mag$, the category of magmas, it simply means that we are asking the magmas $X$, $A$ and $B$ of diagram $(\ref{diag: semibiproduct of magmas})$ to be associative. The question is how to characterize the full subcategory of $\Act$ that is equivalent to the category of semibiproducts of semigroups. The result is somehow surprising but we still recover most of the intuition of groups except for the presence of a new ingredient which is invisible in groups but has already  made its appearance  at the level of monoids \cite{NMF.21}.

For a magma-action $(X,B,\theta,\varphi,h,t)$ we introduce the following notation
\begin{eqnarray}
x+x'&=&\varphi(x,h(x),x',h(x'))\label{eq:x+x'}\\
x^b&=&\varphi(x,h(x),t(b),b)\label{eq:x^b}\\
b\cdot x &=& \varphi(b,t(b),x,h(x))\label{eq:b.x}\\
b\times b' &=& \varphi(t(b),b,t(b'),b')\label{eq:bxb'}
\end{eqnarray}
and consider the set $R=\{(x,b)\in X\times B\mid x^b=x,\, \theta(h(x),b)=b\}$ equipped with the magma operation $+$ as in (\ref{eq: magma operation on R}).
\begin{definition}
A magma-action $(X,B,\theta,\varphi,h,t)$ is said to be \emph{representable} if 
\[\varphi(x,b,x',b')=((x+b\cdot x') + \theta(b,h(x'))\times b')^{\theta(b,b')}\]
for every pair $(x,b)$ and $(x',b')$ in the set $R\subseteq X\times B$.
\end{definition}
In other words, a magma-action is representable if the map $\varphi$ is completely determined by its particular cases (\ref{eq:x+x'})--(\ref{eq:bxb'}). Note that the formula above is a generalization of the usual formula for a pseudo-action of groups with a factor system as displayed in Section \ref{sec:discussion}, equation~(\ref{eq:semidirect product factor system}). It also generalizes the formula in the case of monoids, equation (\ref{eq:semibi product of monoids}), and in the case of semigroups, equation (\ref{eq:semibi product of semigroups}), see also Section \ref{sec:further results on semigroups}.

\begin{definition}
A magma-action $(X,B,\theta,\varphi,h,t)$ is said to be \emph{associative} if the magma $(R,+)$ is a semigroup.
\end{definition}

\begin{proposition}\label{thm:every associative magma-action is representable}
Every associative magma-action is representable.
\end{proposition}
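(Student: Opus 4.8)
The plan is to carry out the whole computation inside the semigroup $(R,+)$ and then compare first coordinates; the three moves are \emph{decompose}, \emph{reassociate}, \emph{recompose}. Before starting I would record three facts, all essentially contained in the proof of Theorem~\ref{thm:Act=Sbp(Mag)}. First, every element $(y,c)\in R$ satisfies $(y,c)=\langle 1,h\rangle(y)+\langle t,1\rangle(c)$, since membership of $(y,c)$ in $R$ says exactly that $\varphi(y,h(y),t(c),c)=y$ and $\theta(h(y),c)=c$. Second, $\langle 1,h\rangle\colon X\to R$ is a morphism of magmas — this is precisely condition~(\ref{h is morphism}) — and, because $\langle 1,h\rangle(w)=(w,h(w))\in R$ and $\langle t,1\rangle(d)=(t(d),d)\in R$ for all $w\in X$, $d\in B$ by~(\ref{h is well defined}) and~(\ref{t is well defined}), the first coordinate of $\langle 1,h\rangle(w)+\langle t,1\rangle(d)$ is $\varphi(w,h(w),t(d),d)=w^{d}$. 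Third, $\pi_B\colon(R,+)\to(B,\theta)$ is a surjective morphism of magmas, so, $(R,+)$ being a semigroup by hypothesis, $(B,\theta)$ is a semigroup too; in particular $\theta$ is associative.

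Now fix $(x,b),(x',b')\in R$. Expanding both factors via the first fact and using associativity of $(R,+)$, I would write
\[(x,b)+(x',b')=\langle 1,h\rangle(x)+\langle t,1\rangle(b)+\langle 1,h\rangle(x')+\langle t,1\rangle(b').\]
The middle product $\langle t,1\rangle(b)+\langle 1,h\rangle(x')=(t(b),b)+(x',h(x'))$ lies in $R$ and, by~(\ref{eq: magma operation on R}), has first coordinate $\varphi(t(b),b,x',h(x'))=b\cdot x'$ (notation~(\ref{eq:b.x})) and second coordinate $\theta(b,h(x'))$; hence by the first fact it equals $\langle 1,h\rangle(b\cdot x')+\langle t,1\rangle(\theta(b,h(x')))$. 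Substituting, reassociating, and performing the analogous move on $\langle t,1\rangle(\theta(b,h(x')))+\langle t,1\rangle(b')$, which lies in $R$ and, by~(\ref{eq: magma operation on R}) and~(\ref{eq:bxb'}), equals $\langle 1,h\rangle(\theta(b,h(x'))\times b')+\langle t,1\rangle(\theta(\theta(b,h(x')),b'))$, I would reach
\[(x,b)+(x',b')=\langle 1,h\rangle(x)+\langle 1,h\rangle(b\cdot x')+\langle 1,h\rangle(\theta(b,h(x'))\times b')+\langle t,1\rangle(\theta(\theta(b,h(x')),b')).\]
Collapsing the three $\langle 1,h\rangle$-terms by the morphism property (applied twice) gives $(x,b)+(x',b')=\langle 1,h\rangle(w)+\langle t,1\rangle(d)$ with $w=(x+b\cdot x')+\theta(b,h(x'))\times b'$ and $d=\theta(\theta(b,h(x')),b')$.

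It remains to read off the first coordinate. By the second fact it is $w^{d}$; and since $\theta$ is associative and $(x',b')\in R$ forces $\theta(h(x'),b')=b'$, the exponent simplifies to $d=\theta(b,\theta(h(x'),b'))=\theta(b,b')$. On the other hand, by~(\ref{eq: magma operation on R}) the first coordinate of $(x,b)+(x',b')$ is just $\varphi(x,b,x',b')$. Equating the two expressions yields $\varphi(x,b,x',b')=\big((x+b\cdot x')+\theta(b,h(x'))\times b'\big)^{\theta(b,b')}$, which is exactly the representability identity. I expect the main obstacle to be purely organisational: checking at each stage that the subexpressions being multiplied genuinely lie in $R$ (so that the partial operation $+$ is defined and associativity may be applied) and keeping the two coordinates separated throughout. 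The one step that is not bare rewriting is the passage from associativity of $(R,+)$ to associativity of $\theta$ via the surjective morphism $\pi_B$, and it is precisely this that lets the exponent collapse from $\theta(\theta(b,h(x')),b')$ to $\theta(b,b')$.
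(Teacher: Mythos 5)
Your proof is correct and follows essentially the same route as the paper: the paper deduces the representability formula by viewing $X\to R\to B$ as a semibiproduct of semigroups and citing equation~(\ref{eq:binary operaration}) of Theorem~\ref{thm:1}, whose own proof is exactly your decompose--reassociate--recompose computation (with $\langle 1,h\rangle$, $\langle t,1\rangle$, $\pi_X$, $\pi_B$ playing the roles of $k$, $s$, $q$, $p$). Your version simply inlines that computation in magma-action notation, including the same final step of collapsing the exponent $\theta(\theta(b,h(x')),b')$ to $\theta(b,b')$ via associativity of $\theta$ and membership of $(x',b')$ in $R$.
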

\begin{proof}
If $(R,+)$ is a semigroup then $(B,\theta)$ and $(X,+)$ are semigroups too. Hence,
\begin{equation}
\xymatrix{X\ar@<-.5ex>[r]_{\langle 1,h\rangle} & R\ar@<-.5ex>@{..>}[l]_{\pi_X}\ar@<.5ex>[r]^{\pi_B} & B \ar@{..>}@<.5ex>[l]^{\langle t,1\rangle}}
\end{equation}
is a semibiproduct of semigroups and by Theorem \ref{thm:1}, see  Section \ref{sec:further results on semigroups}, we have the desired result from equation (\ref{eq:binary operaration}).
\end{proof}

Clearly, magma-actions are not representable in general and there are representable actions which are not associative as illustrated in the following example.

\begin{example}\label{example:1}
A representable magma-action $(X,B,\theta,\varphi,h,t)$ which is not associative is obtained if we let $X=B=\{1,2\}$,  $\theta(b,b')=2$ when $(b,b')$ is $(1,2)$, otherwise $\theta(b,b')=1$. The map $\varphi$ is displayed in Table~\ref{tab:my_label}
\begin{table}[ht]
    \centering
    \begin{tabular}{c|c|c|c|c|c}
    & $x$ & $b$ & $x'$ & $b'$ & $\varphi(x,b,x',b')$ \\
    \hline
1 & 1 & 1 & 1 & 1 & 1 \\
2 & 2 & 1 & 1 & 1 & 2 \\
3 & 1 & 2 & 1 & 1 & 1 \\
4 & 2 & 2 & 1 & 1 & 2 \\\hline
5 & 1 & 1 & 2 & 1 & 2 \\
6 & 2 & 1 & 2 & 1 & 1 \\
7 & 1 & 2 & 2 & 1 & 1 \\
8 & 2 & 2 & 2 & 1 & 2 \\\hline
9 & 1 & 1 & 1 & 2 & 1 \\
10 & 2 & 1 & 1 & 2 & 2 \\
11 & 1 & 2 & 1 & 2 & 1 \\
12 & 2 & 2 & 1 & 2 & 2 \\\hline
13 & 1 & 1 & 2 & 2 & 2 \\
14 & 2 & 1 & 2 & 2 & 1 \\
15 & 1 & 2 & 2 & 2 & 1 \\
16 & 2 & 2 & 2 & 2 & 2 
    \end{tabular}
    \caption{The map $\varphi\colon X\times B\times X\times B\to X$ of Example \ref{example:1}}
    \label{tab:my_label}
\end{table}
while $h(x)=t(b)=1$ for all $x\in X$ and  for all $b\in B$.

In this case, we obtain
\begin{center}
\begin{tabular}{c|c|c|c|c|c|c|c}
  & $x|b$ & $x'|b'$ & $\theta(b,b')$ & $x+x'$ & $x^{b'}$ & $b\cdot x'$ & $b\times b'$ \\
    \hline
    1 & 1 & 1 & 1 & 1 & 1 & 1 & 1 \\
2 & 2 & 1 & 1 & 2 & 2 & 1 & 1 \\
3 & 1 & 2 & 2 & 2 & 1 & 2 & 1 \\
4 & 2 & 2 & 1 & 1 & 2 & 1 & 1 \\
\end{tabular}
\end{center}
and the set $R=\{(x,b)\mid x^b=x,\theta(h(x),b)=b\}$, bijective to $X\times Y\cong\{11,21,12,22\}$, is equipped with the magma operation
\begin{center}
\begin{tabular}{c|cccc}
    + & 11 & 21 & 12 & 22  \\\hline
11 & 11 & 21 & 12 & 22  \\
21 & 21 & 11 & 22 & 12  \\
12 & 11 & 11 & 11 & 11  \\
22 & 21 & 21 & 21 & 21 
\end{tabular}
\end{center}
which is not associative, e.g. $22+(11+12)=21$ whereas $(22+11)+12=22$. Nevertheless, this magma-action is representable since
\begin{equation*}
    \varphi(x,b,x',b')=((x+b\cdot x') + \theta(b,h(x'))\times b')^{\theta(b,b')}
\end{equation*}
 holds for all $x,x'\in X$ and $b,b'\in B$.
\end{example}

\begin{theorem}
The category of associative magma-actions is equivalent to the category of semibiproducts of semigroups.
\end{theorem}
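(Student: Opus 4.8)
The plan is to show that the equivalence of Theorem~\ref{thm:Act=Sbp(Mag)} restricts to an equivalence between the full subcategory of $\Act$ consisting of the associative magma-actions and the full subcategory of the category of semibiproducts of magmas consisting of those semibiproducts in which $X$, $A$ and $B$ are semigroups --- which, since a magma morphism between semigroups is nothing but a semigroup morphism, is exactly the category of semibiproducts of semigroups. So essentially all the content will be to verify that the two functors built in the proof of Theorem~\ref{thm:Act=Sbp(Mag)} send an object of one subcategory to an object of the other; the morphisms then require nothing further, both being full subcategories of categories on which the functors are already defined.

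First I would recall the shape of those functors. A semibiproduct of magmas $(\ref{diag: semibiproduct of magmas})$ is sent to the magma-action with $\theta(b,b')=b+b'$, $\varphi(x,b,x',b')=q((k(x)+s(b))+(k(x')+s(b')))$, $h=pk$ and $t=qs$; a magma-action $(X,B,\theta,\varphi,h,t)$ is sent to the semibiproduct $(\ref{diag: semibiproduct of a magma action})$; and the natural isomorphism relating the two composites on the semibiproduct side has components $\alpha(x,b)=k(x)+s(b)$ and $\beta(a)=(q(a),p(a))$, which exhibit the magma $A$ and the magma $(R,+)$ as isomorphic.

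Next I would check both directions of the object correspondence. If $(\ref{diag: semibiproduct of magmas})$ is a semibiproduct of semigroups, then $A$ is a semigroup, hence so is the isomorphic magma $(R,+)$, and therefore the associated magma-action is associative. Conversely, if $(X,B,\theta,\varphi,h,t)$ is associative, so that $(R,+)$ is a semigroup, then in $(\ref{diag: semibiproduct of a magma action})$ the object $R$ is a semigroup, and so are the two outer objects: $\langle 1,h\rangle\colon(X,+)\to(R,+)$ is an injective magma morphism, split by $\pi_X$, so $(X,+)$ is (isomorphic to) a subsemigroup of $(R,+)$; and $\pi_B\colon(R,+)\to(B,\theta)$ is a surjective magma morphism, split by $\langle t,1\rangle$, so $(B,\theta)$ is a quotient of the semigroup $(R,+)$. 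This is exactly the observation already used in the proof of Proposition~\ref{thm:every associative magma-action is representable}. Hence $(\ref{diag: semibiproduct of a magma action})$ is a semibiproduct of semigroups. It only remains to note that the natural isomorphism of Theorem~\ref{thm:Act=Sbp(Mag)} is built from magma morphisms, which between semigroups are semigroup morphisms, so it restricts to the subcategories, and the restricted functors therefore form an equivalence.

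The only point I expect to need genuine, if minor, attention is the one just made about the outer objects: the definition of an associative magma-action constrains only $(R,+)$, so one must confirm that translating it back to a semibiproduct does not produce a non-associative $X$ or $B$. The submagma and quotient arguments above settle this, and no deeper input is required --- in particular nothing beyond Theorem~\ref{thm:Act=Sbp(Mag)} and elementary facts about subsemigroups and quotient semigroups enters the proof.
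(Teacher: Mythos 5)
Your proof is correct and follows essentially the same route as the paper: restrict the equivalence of Theorem~\ref{thm:Act=Sbp(Mag)}, using that $A\cong(R,+)$ transfers associativity in one direction and that $(R,+)$ being a semigroup forces $(X,+)$ and $(B,\theta)$ to be semigroups in the other. Your treatment is in fact more explicit than the paper's very terse proof, spelling out the submagma/quotient argument for the outer objects that the paper only states in passing in the proof of Proposition~\ref{thm:every associative magma-action is representable}.
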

\begin{proof}
By Theorem \ref{thm:Act=Sbp(Mag)} and the fact that when $(R,+)$ is a semigroup then 
\begin{equation}
\xymatrix{X\ar@<-.5ex>[r]_{\langle 1,h\rangle} & R\ar@<-.5ex>@{..>}[l]_{\pi_X}\ar@<.5ex>[r]^{\pi_B} & B \ar@{..>}@<.5ex>[l]^{\langle t,1\rangle}}
\end{equation}
is a semibiproduct of semigroups. It remains to observe that the magma-action corresponding  to a semibiproduct of semigroups is associative. In particular it is representable as may be seen directly from Theorem~\ref{thm:1}.
\end{proof}

As soon as a magma-action is representable it is preferable to consider it as a pseudo-action as detailed at the end of Section \ref{sec:discussion}.

\section{Semibiproducts of semigroups}\label{sec:further results on semigroups}

In this section we analyse semibiproducts on a deeper level in the category of semigroups and give some remarks on how to generalize it into a wider categorical context. Let us repeat the definition so that no confusion may arise.

\begin{definition}\label{def:1} 
A semibiproduct of semigroups is a diagram 
\begin{equation}
\label{diag: semi-biproduct of semigroups (1)}
\xymatrix{X\ar@<-.5ex>[r]_{k} & A\ar@<-.5ex>@{..>}[l]_{q}\ar@<.5ex>[r]^{p} & B \ar@{->}@<.5ex>@{..>}[l]^{s}}
\end{equation}
in which $X$, $A$ and $B$ are semigroups,  $k$ and $p$ are semigroup homomorphisms, $s$ and $q$ are set theoretical maps and the following conditions are satisfied:
\begin{eqnarray}
kq+sp=1_A\label{eq:kq+sp=1A(1)}\\
ps=1_B\label{eq:ps=1B(1)}\\
qk=1_{X}.\label{eq:qk=1X(1)}
\end{eqnarray}
\end{definition}

Equation (\ref{eq:kq+sp=1A(1)}) has the obvious meaning that $kq(a)+sp(a)=a$ for all $a\in A$. 
The definition of semibiproduct can be generalized into the wider context of a category $\C$, equipped with a bifunctor $H\colon{\C^{\text{op}}\times \C\to\Set}$ and two natural transformations
\[\xymatrix{H\times H \ar[r]^(.6){\mu} & H & \hom_{\C}\ar[l]_{\varepsilon}}.\]
When $\C$ is the category of semigroups then $\hom_{\C}(A,B)$ is the set of semigroup homomorphisms from $A$ to $B$ and we take $H(A,B)$ to be the set of all maps from the underlying set of $A$ into the underlying set of $B$. The natural transformation $\varepsilon$ is the inclusion of a homomorphism as a map while the natural transformation $\mu$ is the usual component-wise addition of maps in semigroups.

In this more general setting, a semibiproduct consists of a sequence
\[\xymatrix{X\ar[r]^{k}&A\ar[r]^{p}&B}\]
together with $s\in H(B,A)$ and $q\in H(A,X)$ such that $\mu(kq,sp)=\varepsilon(1_A)$, $ps=\varepsilon(1_B)$ and $qk=\varepsilon(1_X)$. The notation $guf=H(f,g)(u)$ is useful and particularly successful in expressing $kq=H(1,k)(q)$, $sp=H(p,1)(s)$, $qk=H(k,1)(q)$ and $ps=H(1,p)(s)$. Further details can be found in \cite{Brown,NMF.15,NMF.21}. Moreover, it is worthwhile noting that the category $\Set$ can be replaced by any skew-monoidal category thus resulting a skew-enriched structure in the sense of \cite{Campbell}. A different direction for  approaching in particular Schreier extensions is suggested in \cite{MontoliRodeloLinden}.    


\vspace{.5cm}

Given a semibiproduct of semigroups, say 
 \begin{equation}\label{diag: semi-biproduct of semigroups (2)}
\xymatrix{X\ar@<-.5ex>[r]_{k} & A\ar@<-.5ex>@{..>}[l]_{q}\ar@<.5ex>[r]^{p} & B \ar@{->}@<.5ex>@{..>}[l]^{s}}
\end{equation}
we put $h=pk$ and consider the maps  $\rho$,  $\varphi$, $\gamma$, defined as
\begin{eqnarray}
\rho(x,b)=q(k(x)+s(b))\label{eq:rho}\\
\varphi(b,x)=q(s(b)+k(x))\label{eq:varphi}\\
\gamma(b,b')=q(s(b)+s(b'))\label{eq:gamma}
\end{eqnarray}
for all $x\in X$ and $b,b'\in B$.

The following theorem is a collection of results that are obtained by considering  a semibiproduct of semigroups with $h$, $\rho$, $\varphi$, $\gamma$ as above.

\begin{theorem}\label{thm:1}
Let be given a semibiproduct of semigroups such as the one in (\ref{diag: semi-biproduct of semigroups (2)}). Then:
\begin{enumerate}
\item $h(x)=h(x)+h(x)$, for all $x\in X$;\label{them:1(1)}
\item $p(a)=hq(a)+p(a)$, for all $a\in A$;\label{them:1(2)}
\item $q(a)=\rho(q(a),p(a))$, for all $a\in A$;\label{them:1(3)}
\item the following equation holds for every $a,a'\in A$ 
\begin{equation}
a+a'=k(qa+\varphi(pa,qa')+\gamma(pa+hqa',pa'))+sp(a+a');
\end{equation}\label{them:1(4)}
\item the map $\langle q,p \rangle\colon{A\to X\times B}$ is injective;
\item the image of the map $\langle q,p \rangle$ is $R\subseteq X\times B$ defined as 
\begin{equation}\label{eq:def R}
R=\{(x,b)\mid \rho(x,b)=x,\; h(x)+b=b\};
\end{equation}
\item the map $\alpha\colon{R\to A}$, defined as $\alpha(x,b)=k(x)+s(b)$, is a bijection;
\item the binary operation on the set $X\times B$, defined as
\begin{equation}\label{eq:binary operaration}
(x,b)+(x',b')=(\rho(x+\varphi(b, x')+\gamma(b+h(x'), b'),b+b'),b+b')
\end{equation}
is well defined on the set $R\subseteq X\times B$;
\item the set $R$ equipped with the binary operation (\ref{eq:binary operaration}) is a semigroup;
\item the map $\alpha\colon{R\to A}$ is an isomorphism of semigroups with inverse $\beta\colon{A\to R}$, the map defined as $\beta(a)=(q(a),p(a))$;
\item in the diagram
\begin{equation}
\xymatrix{X\ar@<-.5ex>[r]_{k}\ar@{=}[d] & A\ar@{->}@<-.5ex>[d]_{\beta}\ar@<-.5ex>@{..>}[l]_{q}\ar@<.5ex>[r]^{p} & B\ar@{=}[d] \ar@{->}@<.5ex>@{..>}[l]^{s}\\
X\ar@<-.5ex>[r]_(.5){\iota_X} & R\ar@{->}@<-.5ex>[u]_{\alpha} \ar@<-.5ex>@{..>}[l]_(.5){\pi_{X}}\ar@<.5ex>[r]^(.5){\pi_B} & B \ar@{->}@<.5ex>@{..>}[l]^(.5){\iota_B}}
\end{equation}
where $\pi_B(x,b)=b$, $\pi_X(x,b)=x$, $\iota_B(b)=(qs(b),b)$, $\iota_X(x)=(x,h(x))$, the bottom row is a semibiproduct of semigroups.
\end{enumerate}
\end{theorem}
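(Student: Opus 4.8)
The plan is to establish the twelve items essentially in the order listed, since each one feeds into the next. Items \eqref{them:1(1)} and \eqref{them:1(2)} are immediate: apply $p$ to the identity $kq(a)+sp(a)=a$ and use $ps=1_B$, $qk=1_X$ and the fact that $p$ is a homomorphism; specializing $a=k(x)$ and using $qk=1_X$ gives $h(x)=h(x)+h(x)$, and the general $a$ gives $p(a)=hq(a)+p(a)$. For item \eqref{them:1(3)}, apply $q$ to $kq(a)+sp(a)=a$ and recognize $q(k(q(a))+s(p(a)))=\rho(q(a),p(a))$ by the very definition \eqref{eq:rho} of $\rho$.

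For item \eqref{them:1(4)} the idea is to expand $a+a' = (kq(a)+sp(a)) + (kq(a')+sp(a'))$ using associativity of $A$ and the homomorphism property of $k$ and $p$, repeatedly inserting $kq+sp=1_A$ where a map-valued expression appears. Concretely, I would regroup the four-fold sum so that the three ``$q$-type'' terms $q(a)$, $\varphi(p(a),q(a'))$, $\gamma(p(a)+hq(a'),p(a'))$ appear as the image under $k$ of their sum inside $X$, while the $B$-part collapses to $sp(a+a')$; the bookkeeping uses $k(x)+s(b) = \bigl(k(x)+s(b)\bigr)$ split again via $kq+sp$, and the definitions \eqref{eq:rho}--\eqref{eq:gamma}. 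This is the most computational step but is purely a matter of careful associativity manipulation. Items \eqref{them:1(4)} and the earlier items together show $a$ is determined by the pair $(q(a),p(a))$ — indeed item \eqref{them:1(4)} exhibits $a+a'$ in terms of $q$ and $p$ of $a,a'$, and more directly $a = kq(a)+sp(a)$ already expresses $a$ through $q(a)$ and $p(a)$; hence $\langle q,p\rangle$ is injective, giving item (5).

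For items (6)--(7) I would first check that for every $a\in A$ the pair $(q(a),p(a))$ lies in $R$: $\rho(q(a),p(a)) = q(a)$ is item \eqref{them:1(3)}, and $hq(a)+p(a)=p(a)$ is item \eqref{them:1(2)}. Conversely, if $(x,b)\in R$ then setting $a=k(x)+s(b)$ one computes $q(a)=\rho(x,b)=x$ and $p(a)=h(x)+b=b$, so $\langle q,p\rangle(a)=(x,b)$; this shows the image is exactly $R$ and that $\alpha$ and $\beta$ are mutually inverse bijections between $R$ and $A$. Item (8): the operation \eqref{eq:binary operaration} on $X\times B$ restricts to $R$ precisely because, under the bijection $\alpha$, it is the transport of the operation of $A$ — that is, $\alpha$ carries \eqref{eq:binary operaration} to $+$ on $A$ by item \eqref{them:1(4)} rewritten via $\rho$, $\varphi$, $\gamma$ — and $A$ is closed; one then notes the formula lands back in $R$ since it equals $\beta(a+a')$. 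Items (9)--(10) are then formal: transporting the associative operation of the semigroup $A$ along the bijection $\alpha$ makes $(R,+)$ a semigroup and $\alpha$, $\beta$ mutually inverse semigroup isomorphisms. Finally, item (11): the maps $\pi_X$, $\pi_B$, $\iota_X$, $\iota_B$ are the images under $\beta$ (resp.\ $\alpha$) of $q$, $p$, $k$, $s$, so all three semibiproduct equations for the bottom row are obtained by conjugating \eqref{eq:kq+sp=1A(1)}--\eqref{eq:qk=1X(1)} by the isomorphism $\alpha$; that $\pi_B$ is a homomorphism and $\iota_X$ is a homomorphism follows since $p=\pi_B\alpha$ and $k=\alpha\iota_X$ with $p$, $k$, $\alpha$ homomorphisms.

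I expect the genuine obstacle to be item \eqref{them:1(4)}: correctly expanding the four-term product in $A$ and reorganizing it into the claimed shape without losing track of which associativity brackets and which insertions of $kq+sp=1_A$ are used, since the emergence of the particular correction term $\gamma(p(a)+hq(a'),p(a'))$ — with the shifted first argument $p(a)+hq(a')$ rather than just $p(a)$ — is the subtle point and is exactly what distinguishes the semigroup case from groups. Everything after (5) is essentially transport of structure along the bijection $\alpha$ and should be routine once \eqref{them:1(4)} is in hand.
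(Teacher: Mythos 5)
Your proposal is correct and follows essentially the same route as the paper: items (1)--(3) by applying $p$ and $q$ to $kq+sp=1_A$, item (4) by regrouping the four-fold sum and twice inserting $kq+sp=1_A$ (the paper does this via the auxiliary element $u=sp(a)+kq(a')$, whose image $p(u)=p(a)+hq(a')$ is exactly the shifted first argument of $\gamma$ you flagged), and items (5)--(11) by transport of structure along the mutually inverse bijections $\alpha$ and $\beta$. The only difference is one of detail rather than method: the paper writes out the associativity manipulation for item (4) and verifies $1_R=\beta(kq+sp)\alpha=\iota_X\pi_X+\iota_B\pi_B$ explicitly in item (11), both of which your sketch correctly anticipates.
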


\begin{proof} We observe:
\begin{enumerate}
\item If starting with $k(x)$ and decomposing it as  $kqk(x)+spk(x)$, which is the same as $k(x)+sh(x)$, then we get
\[h(x)=p(k(x))=p(k(x)+sh(x))=h(x)+h(x).\]

\item For every $a\in A$,
\begin{eqnarray*}
p(a) &=& p(kq(a)+sp(a))\\
 &=& pkq(a)+psp(a)\\
 &=& hq(a)+p(a).
\end{eqnarray*}

\item For every $a\in A$,
\begin{equation*}
q(a)=q(kq(a)+sp(a))=\rho(q(a),p(a)).
\end{equation*}

\item Replacing $\varphi$ and $\gamma$ in the equation we have
\[a+a'=k(qa+q(spa+kqa')+q(s(pa+hqa')+spa'))+sp(a+a')\]
which is obtained as
\begin{eqnarray*}
a+a'&=& (kqa+spa)+(kqa'+spa'),  \\
    &=& kqa+(spa+kqa')+spa', \quad (\text{let $u=spa+kqa'$})\\
    &=& kqa+(kq(u)+sp(u))+spa' \\
    &=& kqa+kq(u)+(sp(u)+spa') \\
    &=& kqa+kq(u)+kq(sp(u)+spa')+sp(sp(u)+spa') \\
    &=& kqa+kq(u)+kq(s(pa+hqa')+spa')+s(p(u)+pa') \\
    &=& kqa+kq(u)+kq(s(pa+hqa')+spa')+sp(u+a') \\
    &=& k(qa+q(u)+q(s(pa+hqa')+spa'))+sp(a+a')  
\end{eqnarray*}
with $p(u+a')$ being the same as $p(a+a')$ due to $u=spa+kqa'$ and $p(spa+kqa'+a')=pa+p(kqa'+spa')=pa+pa'$.

\item The map $a\mapsto(q(a),p(a))$ is injective for if $(qa,pa)=(qa',pa')$ then $a=kqa+spa=kqa'+spa'=a'$.

\item If $a\in A$ then $(qa,pa)\in R$; as a matter of fact we have already seen that $\rho(q(a),p(a))=q(a)$ and $hq(a)+p(a)=p(a)$. Similarly, if $(x,b)\in R$ then there exists $a\in A$, namely $a=kx+sb$, with $q(a)=x$ and $p(a)=b$. Indeed, because $(x,b)\in R$ we have $q(a)=q(kx+sb)=\rho(x,b)=x$ and $p(a)=p(kx+sb)=h(x)+b=b$.

\item On the one hand we have
\[a\mapsto (qa,pa)\mapsto kqa+spa=a\] 
while on the other hand
\[(x,b)\mapsto kx+sb\mapsto(q(kx+sb),p(kx+sb))=(\rho(x,b),h(x)+b)\]
and if $(x,b)\in R$ then $(\rho(x,b),h(x)+b)=(x,b)$.

\item Considering $a=kx+sb$ and $a'=kx'+sb'$ we have $(q(a+a'),p(a+a'))\in R$; by the item (4) in the Theorem we know that $(q(a+a'),p(a+a'))$ is precisely \[(\rho(x+\varphi(b, x')+\gamma(b+h(x'), b'),b+b'),b+b')\]
as soon as $(x,b)$ and $(x',b')$ are both in $R$.

\item Since $\alpha$ is a bijection and the operation in $R$ is obtained as $(x,b)+(x',b')=(q(a+a'),p(a+a'))$ with $a+a'=kx+sb+kx'+sb'$ it follows that it must be associative;

\item and $\alpha$ is an isomorphism with inverse $\beta(a)=(q(a),p(a))$.

\item We have $\pi_B\iota_B=1_B$ and $\pi_X\iota_X=1_X$.  In order to prove 
\[\iota_X\pi_X+\iota_B\pi_B=1_R\]
 first we observe that the identities
\begin{eqnarray}
q\alpha=\pi_X\\
\beta k=\iota_X\\
p\alpha=\pi_B\\
\beta s=\iota_B
\end{eqnarray}
hold true and then we compute
\[1_R=\beta\alpha=\beta(kq+sp)\alpha=\beta kq\alpha+\beta s p \alpha=\iota_X\pi_X+\iota_B\pi_B.\]
\end{enumerate}
This shows that the bottom row in the diagram is a semibiproduct of semigroups as desired.
\end{proof}

\section{The special case of groups}\label{sec: special case of groups}

In this section we provide a survey analysis in the case of groups and compare it with classical results.  
It is not difficult to see that in groups the well-known correspondence between semidirect products and split extensions is expanded into a correspondence between semibiproducts and extensions with a specified section (but the section need not be a homomorphism). This is similar to the original theory developed by Schreier and MacLane on the classification of nonabelian extensions of abstract groups which has led to low dimensional nonabelian group cohomology, sometimes called Schreier’s theory of nonabelian group extensions (\cite{EM,MacLane2,LB}). Let us recall that
the traditional Schreier-MacLane way to obtain a nonabelian group 2-cocycle from a group extension  starts with choosing a set-theoretic section of the quotient homomorphism $p:A\to B$. In our case the choice of the section is not needed because in a semibiproduct it is already part of the structure.

We will denote a semibiproduct of semigroups such as the one displayed in (\ref{diag: semi-biproduct of semigroups (2)}) as a tuple $(X,A,B,p,q,k,s)$ and associate to it the tuple $(h,\rho,\varphi,\gamma)$ with $h=pk$ and $\rho,\varphi,\gamma$ the maps defined as in (\ref{eq:rho})--(\ref{eq:gamma}).

\begin{proposition}
Let $p\colon{A\to B}$ be a surjective group homomorphism with a specified section map $s\colon{B\to A}$, i.e., $ps=1_B$. Then the tuple $(X,A,B,p,q,k,s)$ is a semibiproduct of groups as soon as $k\colon{X\to A}$ is the kernel of $p$ and the map $q\colon{A\to X}$ is such that $kq(a)=a-sp(a)$, for all $a\in A$. Moreover, if the section $s$ is a group homomorphism then the semibiproduct $(X,A,B,p,q,k,s)$ is a semidirect product.
\end{proposition}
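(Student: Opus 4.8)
The plan is to check, one by one, the three defining equations of a semibiproduct, namely $kq+sp=1_A$, $ps=1_B$ and $qk=1_X$, and then to identify the semidirect product as the special case singled out in the discussion preceding Corollary~\ref{thm: corollary semidirect product of unitary magmas}: a semibiproduct in which $pk=0$, $qs=0$, the section $s$ is a homomorphism, and the Schreier condition $q(k(x)+s(b))=x$ holds. Since groups are unitary magmas, that characterization applies verbatim, so it suffices to exhibit these four properties.

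First I would unwind what it means for $k$ to be the kernel of $p$: the homomorphism $k$ is injective, its image equals $\{a\in A\mid p(a)=0\}$, and in particular $pk=0$. For every $a\in A$ we have $p(a-sp(a))=p(a)-psp(a)=0$, so $a-sp(a)$ lies in the image of $k$; as $k$ is mono, this determines a \emph{unique} map $q$ with $kq(a)=a-sp(a)$, so the stated condition really does define $q$. The first equation is then immediate, $kq(a)+sp(a)=(a-sp(a))+sp(a)=a$, and $ps=1_B$ is a hypothesis.

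The equation that needs a word of care is $qk=1_X$. Evaluating the defining relation at $a=k(x)$ and using $pk=0$ gives $kq(k(x))=k(x)-s(0_B)$, so by injectivity of $k$ the identity $qk=1_X$ holds exactly when the section is normalized at the identity, $s(0_B)=0_A$. I would take this normalization as part of the data (sections in the pointed category of groups being customarily basepoint-preserving), or else remark that one may replace $s$ by $b\mapsto s(b)-s(0_B)$, which still satisfies $ps=1_B$. This is really the only delicate point; the rest is routine manipulation of the three identities.

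For the final assertion, suppose $s$ is a group homomorphism. Then $pk=0$ has already been noted; $qs=0$ follows from $kq(s(b))=s(b)-sps(b)=s(b)-s(b)=0=k(0_X)$ together with injectivity of $k$; and the Schreier condition follows because $p(k(x)+s(b))=pk(x)+ps(b)=b$, so $kq(k(x)+s(b))=(k(x)+s(b))-s(b)=k(x)$ and hence $q(k(x)+s(b))=x$. Having verified $pk=0$, $qs=0$, that $s$ is a homomorphism, and the Schreier condition, the cited characterization identifies $(X,A,B,p,q,k,s)$ with a semidirect product of groups, which completes the argument.
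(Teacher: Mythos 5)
Your verification is correct, and it is worth noting that the paper states this proposition without any proof at all, so there is no argument of the author's to compare against; your write-up supplies exactly the routine checks one would expect. Two points deserve comment. First, you correctly isolate the one genuinely delicate step: $qk=1_X$ forces $kqk(x)=k(x)-s(0_B)$ to equal $k(x)$, hence requires the normalization $s(0_B)=0_A$, which the proposition's bare hypothesis ``$ps=1_B$'' does not guarantee for a set-theoretic section. This is a real (if minor) gap in the statement as printed, and your two proposed fixes --- taking normalized sections as part of the data, or replacing $s$ by $b\mapsto s(b)-s(0_B)$, which still satisfies $ps=1_B$ --- are both standard and adequate. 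Second, for the ``moreover'' clause you route through the characterization preceding Corollary~\ref{thm: corollary semidirect product of unitary magmas}, verifying $pk=0$, $qs=0$, the Schreier condition, and that $s$ is a homomorphism; this is valid, though slightly more than is needed, since in Section~\ref{sec:discussion} the paper takes a semidirect product of groups to be precisely a semibiproduct in which $s$ is additionally a homomorphism, so the final assertion is essentially definitional once the first part is established. Either reading gives a correct proof.
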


\begin{proposition}\label{prop:1}
Let $(X,A,B,p,q,k,s)$ be a semibiproduct of semigroups with associated tuple $(h,\rho,\varphi,\gamma)$  as displayed in (\ref{diag: semi-biproduct of semigroups (2)}). If $X$, $A$ and $B$ are groups, then:
\begin{enumerate}
\item the map $q$ is uniquely determined as $kq(a)=a-sp(a)$, for all $a\in A$;

\item $h=pk$ is the trivial homomorphism;

\item the map $\rho$ is uniquely determined as $\rho(x,b)=x$ for all $(x,b)\in X\times B$;

\item $k$ is the kernel of $p$;

\item $p$ is the cokernel of $k$;

\item the maps $\varphi$ and $\gamma$ encode the information of a pseudo action with a factor system and $A$ is isomorphic to the group $X\rtimes_{\varphi,\gamma} B$ whose operation is
\[(x,b)+(x',b')=(x+\varphi(b,x)+\gamma(b,b'),b+b');\]

\item the maps $\varphi$ and $\gamma$ encode the information of a normal pseudo-functor
\[F\colon{B\to \Grp}\]
with $F(b)\colon{X\to X}$ as $F(b)(x)=\varphi(b,x)$ and $$F_{b,b'}\colon{F(b+b')\Longrightarrow F(b)F(b')}$$ as $F_{b,b'}=\gamma(b,b')\in X$.
\end{enumerate}
\end{proposition}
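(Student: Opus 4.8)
The plan is to extract items (1)--(3) from the semibiproduct equations together with Theorem~\ref{thm:1}, to deduce (4)--(5) from the first isomorphism theorem in $\Grp$, and to obtain (6)--(7) by specialising Theorem~\ref{thm:1}(8)--(10) and recognising the classical nonabelian extension data. I would begin with (2): Theorem~\ref{thm:1}(1) gives $h(x)+h(x)=h(x)$ in the group $B$, so cancellation forces $h(x)=0$; hence $h=pk$ is the trivial homomorphism and Theorem~\ref{thm:1}(2)--(3) become vacuous. Item~(1) then follows at once, since $kq+sp=1_A$ reads $kq(a)=a-sp(a)$ and $qk=1_X$ makes $k$ injective, so $q$ is forced by $k$ and $s$. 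For (3) I would apply $k$ to $\rho(x,b)=q(k(x)+s(b))$ and use (1): $k\rho(x,b)=k(x)+s(b)-s(h(x)+b)=k(x)+s(b)-s(b)=k(x)$ since $h=0$, whence $\rho(x,b)=x$. A convenient fact to record here is that $s(0)=0$: the composite $kq$ is idempotent (since $(kq)(kq)=k(qk)q=kq$), so $sp(kq(a))=0$ for every $a$, while $p(kq(a))=h(q(a))=0$; hence $s(0)=sp(kq(a))=0$, and likewise $qs=0$ because $kqs(b)=s(b)-s(b)=0$.

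Items (4) and (5) are then just the first isomorphism theorem. Indeed $k$ is a monomorphism (from $qk=1_X$), $pk=0$ by (2), and $p$ is surjective --- hence an epimorphism in $\Grp$ --- because $ps=1_B$. Moreover $k(X)=\ker p$: the inclusion $\subseteq$ is (2), and if $p(a)=0$ then $a=kq(a)+sp(a)=kq(a)+s(0)=kq(a)\in k(X)$ by $s(0)=0$. Since $\ker p$ is a normal subgroup and $k$ corestricts to an isomorphism $X\to\ker p$, the universal properties of kernel and of cokernel in $\Grp$ yield (4) and (5) respectively.

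For (6) and (7) I would substitute $h=0$ and $\rho(x,b)=x$ into Theorem~\ref{thm:1}: the set $R$ of (\ref{eq:def R}) becomes all of $X\times B$, the operation (\ref{eq:binary operaration}) collapses to $(x,b)+(x',b')=(x+\varphi(b,x')+\gamma(b,b'),b+b')$ (with $\varphi(b,x')$, cf.\ Theorem~\ref{thm:1}(8)), and Theorem~\ref{thm:1}(9)--(10) then exhibits $A$ as isomorphic to the group $X\rtimes_{\varphi,\gamma}B$ carrying this operation. To see that $(\varphi,\gamma)$ really is a pseudo-action with a factor system --- equivalently a normal pseudo-functor $F\colon B\to\Grp$ with $F(b)=\varphi(b,-)$ and $F_{b,b'}=\gamma(b,b')$ --- I would use (1) once more to obtain
\[k\varphi(b,x)=s(b)+k(x)-s(b),\qquad k\gamma(b,b')=s(b)+s(b')-s(b+b').\]
The first formula presents $\varphi(b,-)$ as conjugation by $s(b)$ on the normal subgroup $\ker p$, transported along the isomorphism $k\colon X\to\ker p$, hence as an automorphism of $X$ with $\varphi(0,-)=1_X$; combined with $qs=0$ it gives the normalisation $\gamma(0,b)=\gamma(b,0)=0$. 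The two remaining conditions --- that $\varphi(b,\varphi(b',x))=\gamma(b,b')+\varphi(b+b',x)-\gamma(b,b')$, so that $\gamma(b,b')$ really is a $2$-cell $F(b+b')\Rightarrow F(b)F(b')$, and the nonabelian cocycle identity $\varphi(b,\gamma(b',b''))+\gamma(b,b'+b'')=\gamma(b,b')+\gamma(b+b',b'')$, which is the pentagon for those $2$-cells --- are exactly what associativity in $A$ (equivalently Theorem~\ref{thm:1}(9)) translates into through the two displayed formulas. Together these are precisely the data of a normal pseudo-functor $B\to\Grp$, and of a pseudo-action with factor system in the sense of Section~\ref{sec:discussion}. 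I expect this last translation --- reading off the two noncommutative cocycle identities from associativity and matching them to the pseudo-functor axioms --- to be the only step that requires genuine care; everything before it is a short direct manipulation of the semibiproduct equations.
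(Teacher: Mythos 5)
Your proof is correct, and for items (1)--(3) it follows essentially the same path as the paper: cancellation in $B$ applied to Theorem~\ref{thm:1}(1) gives $h=0$, the identity $kq(a)=a-sp(a)$ gives (1), and applying $k$ to $\rho(x,b)$ and cancelling $s(b)$ gives (3). Where you diverge is in (4)--(7). For the kernel and cokernel claims the paper invokes its own Corollaries~\ref{cor:ker} and~\ref{cor:coker}, which are consequences of the semibiproduct-specific Propositions~\ref{prop:ker} and~\ref{prop:coker} (the point being that $f=fsp$ is equivalent to $fk=0$ once $pk=0$); you instead give a direct first-isomorphism-theorem argument in $\Grp$, showing $k(X)=\ker p$ via $s(0)=0$. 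Both are valid; the paper's route reuses the general machinery it builds for semigroups, while yours is more elementary but group-specific. For (6) and (7) the paper simply cites classical Schreier theory and the Grothendieck construction, whereas you actually derive the operation on $R=X\times B$ by specialising Theorem~\ref{thm:1}(8)--(10) with $h=0$ and $\rho(x,b)=x$, exhibit $\varphi(b,-)$ as conjugation by $s(b)$ transported along $k$, and indicate how associativity yields the two nonabelian cocycle identities; this fills in more detail than the paper does (and silently corrects the paper's typo $\varphi(b,x)$ to $\varphi(b,x')$ in the displayed operation), though, as you acknowledge, the final translation of associativity into the pseudo-functor axioms is sketched rather than carried out in full --- which is no worse than the paper's bare citation. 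Your auxiliary observations $s(0)=0$ and $qs=0$, obtained from idempotency of $kq$ and injectivity of $k$, are sound and are what make the normalisation $\gamma(0,b)=\gamma(b,0)=0$ work.
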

\begin{proof}
We observe:
\begin{enumerate}
\item follows from $kq(a)+sp(a)=a$ for all $a\in A$;
\item follows from Theorem \ref{thm:1}(\ref{them:1(1)}); 
\item using the fact that $h=pk=0$ we have
\begin{eqnarray*}
kx+sb  &=&  kq(kx+sb)+sp(kx+sb)\\
&=&  kq(kx+sb)+s(pkx+psb)\\
&=&  kq(kx+sb)+s(hx+b)\\
&=&  kq(kx+sb)+s(b)
\end{eqnarray*}
and cancelling out $s(b)$ on both sides we obtain $k(x)=kq(k(x)+s(b))$ from which we conclude $\rho(x,b)=x$;
\item follows from Corollary \ref{cor:ker}, see Section \ref{Sec: kernels and cokernels};
\item follows from Corollary \ref{cor:coker}, see Section \ref{Sec: kernels and cokernels};
\item it is a classical result from Schreier theory, see for example \cite{Northcott};
\item it is a classical result that makes use of the  Grothendieck construction, see \cite{GManuell} for a recent developement on that direction; it considers $B$ as a one object groupoid and the functor $F$ takes values in $\Grp$, the category of all groups, and sends the unique object in the groupoid $B$ to the group $X$ in $\Grp$. 
\end{enumerate}
\end{proof}

%
%
%
%

Summing up, in the context of groups, from every semibiproduct $(X,A,B,p,q,k,s)$ we extract a group extension $X\to A\to B$ with associated maps $\varphi$ and $\gamma$ respectively as pseudo-action and factor system. On the other hand, every group extension with a specified section gives rise to a semibiproduct. However, the same extension
\[\xymatrix{X\ar[r]^{k} & A\ar[r]^{p} & B}\]
 if considered with different sections may give inequivalent semi\-bi\-pro\-ducts due to the fact that an isomorphism between semi\-biproducts has to be compatible with the maps $q$ and $s$ whereas an isomorphism of extensions need not be.


\vspace{.5cm}

\section{Semibiproducts of semigroups as extensions}\label{Sec: kernels and cokernels}

Let us now analyse the general case of semigroups and see how to deal with the similar notions of kernel and cokernel without assuming the existence of a null object.
An important aspect to keep in mind is that the equality
\[(kq+sp)f=kqf+spf\] 
is always true, even when $f$ is a map, whereas  in order to ensure that
\[f(kq+sp)=fkq+fsp\]
 we should require $f$ to be a  homomorphism.
 
 We start by looking at a notion similar to a cokernel of semigroups with respect to a semibiproduct diagram.
 
\begin{proposition}\label{prop:coker}
Let $(X.B,A,p,q,k,s)$ be a semibiproduct of semigroups. For every semigroup homomorphism $f\colon{A\to Z}$, with $f=fsp$, there exists a unique semigroup homomorphism $\bar{f}\colon{B\to Z}$ such that $f=\bar{f}p$. 
\end{proposition}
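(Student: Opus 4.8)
The plan is to take $\bar f = fs\colon B\to Z$ and to verify the three things demanded by the statement: that $\bar f$ is a semigroup homomorphism, that $\bar f p = f$, and that it is the only homomorphism with this property. Two of these come for free. The factorization holds by the hypothesis itself, since $\bar f p = fsp = f$. For uniqueness, note that $ps = 1_B$ makes $p$ a split epimorphism, hence surjective; so if $g\colon B\to Z$ is any homomorphism with $gp = f = \bar f p$, then surjectivity of $p$ forces $g = \bar f$.

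The substantive point is that $\bar f = fs$ is a homomorphism, which is not automatic since $s$ is only a map. Here I would argue as follows. Fix $b,b'\in B$ and consider the element $a = s(b)+s(b')\in A$. Since $p$ is a homomorphism and $ps = 1_B$, we get $p(a) = ps(b)+ps(b') = b+b'$, and since $f$ is a homomorphism, $f(a) = f(s(b))+f(s(b')) = \bar f(b)+\bar f(b')$. On the other hand, applying the hypothesis $f = fsp$ to $a$,
\[ f(a) = fsp(a) = fs(p(a)) = fs(b+b') = \bar f(b+b'). \]
Comparing the two expressions for $f(a)$ yields $\bar f(b+b') = \bar f(b)+\bar f(b')$, as required.

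Conceptually this is the precise sense in which $p$ behaves like the coequalizer of $1_A$ and $sp$: the identity $p\cdot 1_A = p = (ps)p = p\cdot(sp)$ exhibits $p$ as coequalizing that pair, the hypothesis $f = fsp$ is exactly the compatibility needed to factor $f$ through it, and $\bar f = fs$ is the induced arrow. The homomorphism check for $fs$ is the only real obstacle, and the displayed computation disposes of it by using $f = fsp$ to rewrite the argument $s(b+b')$ as the ``additive'' expression $s(b)+s(b')$ before feeding it to the homomorphism $f$. Note in particular that neither $qk = 1_X$ nor $kq+sp = 1_A$ enters the argument, so the proof uses only the $(p,s)$ part of the semibiproduct structure together with the assumption $f = fsp$.
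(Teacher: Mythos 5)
Your proof is correct and follows essentially the same route as the paper: both establish that $\bar f = fs$ is a homomorphism by computing $f(s(b)+s(b'))$ in two ways via $ps=1_B$ and $f=fsp$, and both derive uniqueness from $ps=1_B$ (the paper writes $\bar f=\bar f ps=fs$ where you invoke surjectivity of $p$, which is the same fact). The observation that only the $(p,s)$ part of the structure is used matches the paper's proof as well.
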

\begin{proof}
The map $fs$ is a homomorphism
\[fs(b+b')=fs(psb+psb')=fsp(sb+sb')=f(sb+sb')=fs(b)+fs(b')\]
and so $\bar{f}=fs$ is one solution. To prove uniqueness we observe that if $\bar{f}$ is such that $\bar{f}p=f$ then
\[\bar{f}=\bar{f}ps=fs.\]
\end{proof}
\begin{corollary}\label{cor:coker}
In monoids, if $pk=0$ then $p$ is the cokernel of $k$.
\end{corollary}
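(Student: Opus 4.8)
The plan is to deduce the corollary from Proposition \ref{prop:coker} by showing that the hypothesis ``$fk=0$'' forces the condition $f=fsp$ required there. First I would recall what the statement means: we are in the category $\Mon$, so the cokernel of $k$ is by definition the universal monoid homomorphism $p\colon A\to B$ satisfying $pk=0$ (where $0$ denotes the zero homomorphism, factoring through the trivial monoid). The hypothesis already provides $pk=0$, so the work is to verify the universal property: for every monoid homomorphism $f\colon A\to Z$ with $fk=0$ there is a unique homomorphism $\bar f\colon B\to Z$ with $\bar f p=f$.

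Given such an $f$, I would invoke the semibiproduct identity $kq+sp=1_A$ together with the fact that $f$ is a homomorphism to compute, for every $a\in A$,
\[
f(a)=f(kq(a)+sp(a))=f(kq(a))+f(sp(a))=fk(q(a))+fsp(a)=fsp(a),
\]
where the last equality uses that $fk(q(a))=0_Z$, the neutral element of $Z$, which is absorbed by $+$ in the monoid $Z$. Hence $f=fsp$, so Proposition \ref{prop:coker} applies and delivers a unique homomorphism $\bar f\colon B\to Z$ with $\bar f p=f$. Combined with the hypothesis $pk=0$, this is exactly the defining universal property of the cokernel of $k$ in $\Mon$, so $p=\coker(k)$.

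The argument is short, and the only point needing care is the bookkeeping of what ``$pk=0$'' and ``$fk=0$'' mean in $\Mon$ — namely that the composites equal the zero homomorphism — and the observation that no homomorphism assumptions on $q$ or $s$ are used: only $k$, $p$ and $f$ need to be homomorphisms, the monoid structure on $Z$ being what makes $0_Z$ neutral in the displayed computation. I expect no genuine obstacle beyond confirming that the hypotheses of Proposition \ref{prop:coker} are in force, i.e.\ that the diagram at hand is a semibiproduct of semigroups, which it is since a semibiproduct of monoids is in particular one of semigroups.
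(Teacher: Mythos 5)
Your proof is correct and follows essentially the same route as the paper: both reduce the universal property of the cokernel to Proposition~\ref{prop:coker} by showing that $fk=0$ forces $f=f(kq+sp)=fkq+fsp=fsp$, using that $f$ is a homomorphism and that the neutral element of $Z$ is absorbed. The paper additionally remarks that, under $pk=0$, the condition $f=fsp$ is in fact equivalent to $fk=0$, but that converse is not needed for the statement itself.
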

\begin{proof}
If $fk=0$ then $f=f(kq+sp)=fkq+fsp=fsp$, and using the previous proposition we conclude that there exists a unique $\bar{f}$ such that $\bar{f}p=f$. Moreover, when $pk=0$, the condition $f=fsp$ is equivalent to $fk=0$. Indeed, if $f=fsp$ then $fk=fspk=0$.
\end{proof}

Contrary to the case of groups, in monoids, we can have a semibiproduct in which $h=pk$ is not the trivial homomorphism. Take $A$ to be any monoid of idempotents, that is, $a=a+a$ for all $a\in A$, then the diagram
\[\xymatrix{A\ar@<-.5ex>[r]_{1_A} & A\ar@<-.5ex>@{..>}[l]_{1_A}\ar@<.5ex>[r]^{1_A} & A \ar@{->}@<.5ex>@{..>}[l]^{1_A}}\]
is a semibiproduct and $h=1_A$.

\vspace{.5cm}
Let us now investigate a  notion similar to a kernel of semigroups with respect to a semibiproduct diagram. In this case, even if $h=pk$ is not the trivial homomorphism we observe that $pf=hqf$ implies $f=kqf$.

\begin{proposition}\label{prop:ker}
Let $(X.B,A,p,q,k,s)$ be a semibiproduct of semigroups. For every semigroup homomorphism $f\colon{Z\to A}$, with $pf=hqf$, there exists a unique semigroup homomorphism $\bar{f}\colon{Z\to X}$ such that $f=k\bar{f}$. 
\end{proposition}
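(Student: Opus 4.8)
The plan is to imitate the cokernel argument (Proposition~\ref{prop:coker}) in its dual form, using the decomposition $1_A=kq+sp$ together with the hypothesis to recognise $qf$ as the candidate factorisation. First I would set $\bar f=qf\colon Z\to X$ and check that it is a homomorphism. For that it is enough to know that $f$ lands, up to the relevant identifications, in the image of $k$; concretely I would first show that $pf=hqf$ forces $f=kqf$. This is exactly the remark made just before the statement: applying $kq+sp=1_A$ we get $f=(kq+sp)f=kqf+spf$, and since $f$ is a homomorphism one can also write $f=f\cdot$ (appropriate decompositions) — more to the point, $s(pf)=s(hqf)=s(pkqf)=sp(kqf)$, so $spf=sp(kqf)$ and hence $f=kqf+spf=kqf+sp(kqf)=(kq+sp)(kqf)=kqf$. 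Thus $f=kqf=k\bar f$, which is the desired factorisation and also the key structural fact.

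Next I would verify that $\bar f=qf$ is a homomorphism. Since $q$ itself need not be a homomorphism this is not automatic, but it follows from $f=k\bar f$ with $k$ a monomorphism: from $f(z+z')=f(z)+f(z')$ and $f=k\bar f$ we get $k\bar f(z+z')=k\bar f(z)+k\bar f(z')=k(\bar f(z)+\bar f(z'))$ because $k$ is a homomorphism, and then $k$ injective (which holds because $qk=1_X$) gives $\bar f(z+z')=\bar f(z)+\bar f(z')$. Alternatively one argues directly: $\bar f=qf=qk\bar f=1_X\bar f=\bar f$, and using $qk=1_X$ together with $f=k\bar f$ one reduces everything to the already-known homomorphism property of $\bar f$ composed with $k$; the cleanest route really is the monomorphism argument. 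So $\bar f$ is a semigroup homomorphism with $k\bar f=f$.

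Finally, uniqueness: if $\bar f'\colon Z\to X$ is any semigroup homomorphism with $k\bar f'=f$, then applying $q$ and using $qk=1_X$ gives $\bar f'=qk\bar f'=qf=\bar f$, so the factorisation is unique. I do not expect a genuine obstacle here — the statement is the exact dual of Proposition~\ref{prop:coker} and every ingredient ($kq+sp=1_A$, $qk=1_X$, $k$ and $p$ homomorphisms, the pre-statement remark $pf=hqf\Rightarrow f=kqf$) is already in place. The only point that needs a moment's care is that $\bar f=qf$ is a homomorphism despite $q$ not being one; this is handled by the monomorphism property of $k$ as above, which is why I would prove $f=kqf$ first and only then deduce that $qf$ is multiplicative.
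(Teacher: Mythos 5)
Your proposal is correct and follows essentially the same route as the paper: use $pf=hqf$ together with $h=pk$ and $kq+sp=1_A$ to get $f=(kq+sp)(kqf)=kqf$, deduce that $\bar f=qf$ is a homomorphism from the injectivity of $k$, and obtain uniqueness by applying $q$ and $qk=1_X$. The only (harmless) elision is the intermediate identity $kqf=kq(kqf)$, which the paper writes out explicitly before invoking $kq+sp=1_A$.
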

\begin{proof}
Firstly we observe that if $pf=hqf$ then $f=kqf$. Indeed,
\begin{eqnarray*}
f=(kq+sp)f=kqf+spf=kqf+shqf
\end{eqnarray*}
and having in mind that $h$ is $pk$ and that $qk=1_X$ we have
\begin{eqnarray*}
f=kqkqf+spkqf=(kq+sp)kqf=kqf.
\end{eqnarray*}
 Secondly, we observe that $kqf$ being a homomorphism implies that $qf$ is a homomorphism too. Consequently $\bar{f}=qf$ is one solution. To prove uniqueness we observe that if $\bar{f}$ is such that $k\bar{f}=f$ then $k\bar{f}=f=kqf$ which implies $qk\bar{f}=qkqf$ or $\bar{f}=qf$.
\end{proof}
\begin{corollary}\label{cor:ker}
In monoids, if $pk=0$ then $k$ is the kernel of $p$.
\end{corollary}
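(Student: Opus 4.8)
The plan is to deduce Corollary~\ref{cor:ker} from Proposition~\ref{prop:ker} in precisely the way that Corollary~\ref{cor:coker} was deduced from Proposition~\ref{prop:coker}. First I would observe that the hypothesis $pk=0$ forces $h=pk$ to be the trivial homomorphism, so that $hqf=0$ for every homomorphism $f$ with codomain $A$. Hence the side condition $pf=hqf$ appearing in Proposition~\ref{prop:ker} collapses to the usual kernel condition $pf=0$, which is the whole point of the reduction.

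Next I would recall what ``kernel of $p$'' means in $\Mon$: it is the equalizer of $p$ and the zero homomorphism $A\to B$. So I must check that $k$ equalizes these two, which is immediate from the hypothesis $pk=0$, and that it enjoys the corresponding universal property. Given a monoid homomorphism $f\colon Z\to A$ with $pf=0$, the first observation yields $pf=hqf$, so Proposition~\ref{prop:ker} supplies a unique semigroup homomorphism $\bar f\colon Z\to X$ with $k\bar f=f$; uniqueness among monoid homomorphisms is inherited a fortiori from uniqueness among semigroup homomorphisms.

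The one point requiring a line of care --- and the only place where the passage from semigroups to monoids is actually felt --- is that the factorisation $\bar f$ must be recognised as a monoid homomorphism, not merely a semigroup homomorphism. This follows because $k$ is injective (a consequence of $qk=1_X$) and $k\bar f(1_Z)=f(1_Z)=1_A=k(1_X)$ forces $\bar f(1_Z)=1_X$. I do not anticipate any genuine obstacle: the entire content is the reduction $hqf=0$ together with this bookkeeping of identity elements, after which Proposition~\ref{prop:ker} does all the work.
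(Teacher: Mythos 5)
Your proposal is correct and follows the same route as the paper, which simply observes that when $pk=0$ the condition $pf=hqf$ in Proposition~\ref{prop:ker} collapses to $pf=0$, so that proposition asserts precisely the universal property of the kernel. Your additional check that $\bar f$ preserves the identity (via injectivity of $k$ from $qk=1_X$) is a detail the paper leaves implicit, but it does not change the approach.
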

\begin{proof}
When $pk=0$ the previous proposition asserts precisely that $k$ is the kernel of $p$.
\end{proof}

We conclude this section with a list of all semibiproducts of semigroups with fixed ends of order 2 whose middle object is of order 3. 

Let $B$ and $X$ be the same two element set, say $\{1,2\}$, and consider it with the four possible semigroup structures (up to equivalence) represented by the following multiplication tables
\begin{equation}
M_1=\begin{bmatrix}
1 & 1 \\ 1 & 2 
\end{bmatrix}
,\,
M_2=\begin{bmatrix}
1 & 1 \\ 2 & 2 
\end{bmatrix}
,\,
M_3=\begin{bmatrix}
1 & 2 \\ 2 & 1 
\end{bmatrix}
,\,
M_4=\begin{bmatrix}
1 & 1 \\ 1 & 1 
\end{bmatrix}.
\end{equation}
The following table displays the number of semibiproducts (in which the middle semigroup is of order 3) with ends $X_i=M_i$ and $B_j=M_j$ for all possible cases $i,j=1,2,3, 4$. 
\begin{center}
\begin{tabular}{c|c|c|c|c|}
 & $B_1$ & $B_2$ & $B_3$ & $B_4$ \\ 
\hline 
$X_1$ & 2 & 0 & 2 & 0 \\ 
\hline 
$X_2$ & 4 & 0 & 0 & 0 \\ 	
\hline 
$X_3$ & 2 & 0 & 0 & 0 \\ 
\hline 
$X_4$ & 0 & 0 & 0 & 0 \\ 
\hline 
\end{tabular} 
\end{center}
Here is a detailed  description for each case:
\begin{enumerate}
\item The two cases with ends $X_1$ and $B_1$ have the same maps $p,q,k,s$, defined as
\begin{center}
\begin{tabular}{c|c|c}
$a\in A$ & $p(a)$ & $q(a)$\\ 
\hline
1 & 1 & 1  \\ 
2 & 2 & 2  \\ 
3 & 1 & 2  
\end{tabular} 
\quad
\begin{tabular}{c|c|c}
$x|b$ & $k(x)$ & $s(b)$\\ 
\hline
1 & 1 & 3  \\ 
2 & 2 & 2  
\end{tabular} 
\end{center}
and two different multiplication tables for the middle object.
\begin{equation*}
A_1=\begin{bmatrix}
1 & 1 & 1\\
1 & 2 & 3\\
1 & 3 & 1
\end{bmatrix}
\quad
A_2=\begin{bmatrix}
1 & 1 & 1\\
1 & 2 & 3\\
3 & 3 & 3
\end{bmatrix}
\end{equation*}

\item The two cases with ends $X_1$ and $B_3$ have the same maps $p,k,s$, and the same multiplication table for the middle object $A$,
\begin{center}
\begin{tabular}{c|c}
$a\in A$ & $p(a)$ \\ 
\hline
1 & 1   \\ 
2 & 1   \\ 
3 & 2   
\end{tabular} 
\quad
\begin{tabular}{c|c|c}
$x|b$ & $k(x)$ & $s(b)$\\ 
\hline
1 & 1 & 2  \\ 
2 & 2 & 3  
\end{tabular}
\quad 
$A=\begin{bmatrix}
1 & 1 & 1\\
1 & 2 & 3\\
1 & 3 & 1
\end{bmatrix}$
\end{center}
and admit two different maps $q_1,q_2$ as tabulated.
\begin{center}
\begin{tabular}{c|c|c}
$a\in A$ & $q_1(a)$ & $q_2(a)$\\ 
\hline
1 & 1 & 1  \\ 
2 & 2 & 2  \\ 
3 & 1 & 2  
\end{tabular} 
\end{center}

\item The four cases with ends $X_2$ and $B_1$ have the same maps $p,k$, and the same multiplication table for the middle object $A$,
\begin{center}
\begin{tabular}{c|c}
$a\in A$ & $p(a)$ \\ 
\hline
1 & 2   \\ 
2 & 2   \\ 
3 & 1   
\end{tabular} 
\quad
\begin{tabular}{c|c}
$x$ & $k(x)$ \\ 
\hline
1 & 1  \\ 
2 & 2  
\end{tabular}
\quad 
$A=\begin{bmatrix}
1 & 1 & 1\\
1 & 2 & 3\\
1 & 3 & 1
\end{bmatrix}$
\end{center}
and admit two different possibilities for the map $s$ combined with two different possibilities for the map $q$ as tabulated.
\begin{center}
\begin{tabular}{c|c|c}
$a\in A$ & $q_1(a)$ & $q_2(a)$\\ 
\hline
1 & 1 & 1  \\ 
2 & 2 & 2  \\ 
3 & 1 & 2  
\end{tabular} 
\quad
\begin{tabular}{c|c|c}
$b$ & $s_1(b)$ & $s_2(b)$\\ 
\hline
1 & 3 & 3  \\ 
2 & 1 & 2  
\end{tabular}
\end{center}

\item The two cases with ends $X_3$ and $B_1$ have the same maps $p,k,s$, and the same multiplication table for the middle object $A$,
\begin{center}
\begin{tabular}{c|c}
$a\in A$ & $p(a)$ \\ 
\hline
1 & 2   \\ 
2 & 2   \\ 
3 & 1   
\end{tabular} 
\quad
\begin{tabular}{c|c|c}
$x|b$ & $k(x)$ & $s(b)$\\ 
\hline
1 & 1 & 3  \\ 
2 & 2 & 1  
\end{tabular}
\quad 
$A=\begin{bmatrix}
1 & 2 & 3\\
2 & 1 & 3\\
3 & 3 & 3
\end{bmatrix}$
\end{center}
and admit two different maps $q_1,q_2$ as tabulated.
\begin{center}
\begin{tabular}{c|c|c}
$a\in A$ & $q_1(a)$ & $q_2(a)$\\ 
\hline
1 & 1 & 1  \\ 
2 & 2 & 2  \\ 
3 & 1 & 2  
\end{tabular} 
\end{center}

\end{enumerate}







\section{Brief discussion with a survey of results}\label{sec:discussion}

Recall that in an \emph{Ab}-category, a diagram such as 
\begin{equation}
\label{diag: biproduct of comm semigroups}
\xymatrix{X\ar@<-.5ex>[r]_{k} & A\ar@<-.5ex>@{->}[l]_{q}\ar@<.5ex>[r]^{p} & B \ar@{->}@<.5ex>[l]^{s}}
\end{equation}
satisfying the conditions
\begin{equation}\label{eq: semibiproduct conditions}
    kq+sp=1_A,\quad ps=1_B,\quad qk=1_X
\end{equation}
is simultaneously a product and a coproduct. In particular, there exists a null object and the identities $pk=0$ and $qs=0$ are derived \cite{MacLane}. Clearly, this is not something that can be expected in the category of commutative magmas (or semigroups), nor even in the category of commutative monoids. However, contrary to the case of magmas and semigroups, in monoids the two extra  conditions $pk=0$ and $qs=0$ can be  included as part of the definition thus giving rise to a pointed semibiproduct (see \cite{NMF.21}). This is not possible in commutative semigroups due to the lack of a null object. Surprisingly, as we have seen, there is a way to work out the notion of biproduct of commutative semigroups. Even more surprisingly is the fact that commutativity can be dropped as sson as  we allow set-theoretical maps to enter into our diagrams alongside with homomorphisms. The result is a richer notion of semibiproduct of magmas and semigroups, not necessarily commutative. As a consequence we obtain a classification of semibiproducts of magmas (Theorem  \ref{thm:Act=Sbp(Mag)}) in terms of magma-actions from which  several interesting particular cases are derived. Such is the case of semidirect products of unitary magmas  illustrated in Corollary~\ref{thm: corollary semidirect product of unitary magmas}, or the case of representable actions which include all associative magma-actions (Proposition \ref{thm:every associative magma-action is representable}). 
 
 The particular case of groups has been  analysed with some detail in Section \ref{sec: special case of groups}.
It was observed that a classical semidirect product of groups can be seen as a diagram 
\begin{equation}
\label{diag: semi-direct product of groups}
\xymatrix{X\ar@<-.5ex>[r]_{k} & A\ar@<-.5ex>@{..>}[l]_{q}\ar@<.5ex>[r]^{p} & B \ar@{->}@<.5ex>[l]^{s}}
\end{equation}
in which $X$, $A$ and $B$ are groups (not necessarily abelian),  $k$, $s$, $p$ are group homomorphisms while $q$ is a set theoretical map, moreover, conditions (\ref{eq: semibiproduct conditions}) are satisfied. It follows that the group $A$ is isomorphic to a group $X\rtimes_{\varphi}B$, called the semidirect product of $X$ and $B$ via the action 
\[\varphi\colon{B\to\Aut(X)},\]
  obtained as $\varphi(b)(x)=b\cdot x=q(s(b)+k(x))$, whose group operation is
  \begin{equation}\label{eq:semidirect product}
  (x,b)+(x',b')=(x+b\cdot x',b+b').
  \end{equation}

Our aim was to study the notion of semidirect product in the case of semigroups while extending it in the direction of a biproduct, hence the name \emph{semibiproduct}. It turns out that in order to better understand semibiproducts of semigroups we have to further analyse semibiproducts of magmas.

As we have seen, a semibiproduct of magmas is a diagram 
\begin{equation}
\label{diag: semi-biproduct of semigroups}
\xymatrix{X\ar@<-.5ex>[r]_{k} & A\ar@<-.5ex>@{..>}[l]_{q}\ar@<.5ex>[r]^{p} & B \ar@{->}@<.5ex>@{..>}[l]^{s}}
\end{equation}
in which $X$, $A$ and $B$ are magmas (not necessarily commutative),  $k$ and $p$ are magma homomorphisms while $s$ and $q$ are set theoretical maps, moreover, conditions (\ref{eq: semibiproduct conditions}) are satisfied.

When $X$, $A$ and $B$ are groups the distance from the map $s$ of being a homomorphism is well understood (see e.g. \cite{MacLane2} and its references to previous work). In that case the group $A$ is isomorphic to a group $X\rtimes_{\varphi,\gamma}B$, called the semidirect product of $X$ and $B$ via the pseudo-action 
\[\varphi(b,x)=b\cdot x=q(s(b)+k(x))=s(b)+k(x)-s(b)\]
 and the factor system $\gamma\colon{B\times B\to B}$
 \[\gamma(b,b')=b\times b'=q(s(b)+s(b'))=s(b)+s(b')-s(b+b'),\]
 whose group operation is given by the formula
  \begin{equation}\label{eq:semidirect product factor system}
  (x,b)+(x',b')=(x+(b\cdot x')+(b\times b'),b+b').
  \end{equation}

The case when $X$, $A$ and $B$ are monoids, $k$ and $p$ are monoid homomorphisms and the set theoretical maps $s$ and $q$ preserve the neutral element is quite different from the case of groups (see \cite{NMF.21}, see also \cite{NMF14,Fleischer,Leech,Wells}). Firstly, the extra conditions $pk=0$ and $qs=0$ have to be imposed. Secondly, the monoid $A$ is no longer isomorphic to a monoid $X\rtimes_{\varphi,\gamma} B$ whose underlying set is the cartesian product $X\times B$. As proved in \cite{NMF.21}, in the case of monoids, there is a new ingredient which is invisible in groups. This new ingredient is called a \emph{correction system} in \cite{NMF.21} and it consists of a map $\rho\colon{X\times B\to X}$ denoted by $\rho(x,b)=x^b$ and obtained as
 \[\rho(x,b)=q(k(x)+s(b)).\]
  It can be proved that the correction system is trivial, i.e. $\rho(x,b)=x$, as soon as the monoid $X$ admits cancellation on the right and $B$ is a group. 
The correction system $\rho$ must satisfy some conditions together with the factor system $\gamma$ and the map $\varphi$, which is no longer an action --- it was called a pre-action in \cite{NMF.21}. With these three ingredients at hand we are able to recover the monoid $A$ as being isomorphic to a subset of the cartesian product $X\times B$, namely $R\subseteq X\times B$ defined  as\footnote{Note that we are using $x^b$ as $\rho(x,b)$ in the same way as it is customary to use $b\cdot x$ as $\varphi(b,x)$.}
$$(x,b)\in R\Leftrightarrow x^b=x$$
with neutral element $(0,0)\in R$ and the operation 
 \begin{equation}\label{eq:semibi product of monoids}
  (x,b)+(x',b')=((x+(b\cdot x')+(b\times b'))^{b+b'},b+b'),
  \end{equation}
which can be shown to be well defined on $R$ and associative there. We often write $R$ as $R_{\rho,\varphi,\gamma}$ and thus $A\cong R_{\rho,\varphi,\gamma}$. Clearly, when $\rho(x,b)=x$ is the trivial correction system then we have the same result as for groups with the difference that the extension
$$\xymatrix{X\ar[r]^{k}& A \ar[r]^{p} & B,}$$
is a Schreier extension (see \cite{DB.NMF.AM.MS.13,NMF et all,NMF.21}), rather than an arbitrary extension. Indeed, asking the correction system to be trivial is the same as asking $q(k(x)+s(b))=x$, which is precisely the Schreier condition considerer  in \cite{DB.NMF.AM.MS.13}. Furthermore, in groups, the map $q$ is uniquely determined as $q(a)=a-sp(a)$, while in monoids it is uniquely determined provided the extra conditions $q(k(x)+s(b))=x$ and $pk=0$ are satisfied. When that is the case, the map $q$ is uniquely determined as the $X$-component for the inverse map of $\alpha\colon{X\times B\to A}$, defined as $\alpha(x,b)=k(x)+s(b)$.

\vspace{.5cm}

The results obtained in \cite{NMF.21} for the context of monoids were extended here into the context of magmas and semigroups.
The case of semigroups is even more surprising when compared to groups than the case of monoids. In semigroups, even-though the correction system may be trivial, $x^b=x$, it does not follow that the semigroup $A$ is isomorphic to a semigroup whose underlying set is the cartesian product $X\times B$. This new phenomenon is explained by the lack of condition $pk=0$ which creates a new homomorphism, $h\colon{X\to B}$, as $h=pk$. This new ingredient is used in a refinement of the subset $R=R_{h,\rho,\varphi,\gamma}\subseteq X\times B$ via the formula
\begin{equation}\label{eq:R rho h}
(x,b)\in R\Leftrightarrow \rho(x,b)=x,\; h(x)+b=b.
\end{equation}

A simple example that illustrates the situation is obtained by considering $X=B=(\{0,1\},\cdot)$, the semigroup with the cardinal numbers 0 and 1  and the usual multiplication between them. With all other ingredients being trivial, that is $\rho(x,b)=x$, $\varphi(b,x)=x$ and $\gamma(b,b')=1$ for all $x$ and $b$, there are still three different homomorphisms $h\colon{X\to B}$ to be considered. The two constant maps, $h(x)=0$, $h(x)=1$ and the identity map $h(x)=x$. As expected, when $h(x)=1$ then $R_1=X\times B$ is the cartesian product semigroup. When $h$ is the identity homomorphism then $R_h=\{(0,0),(1,0),(1,1)\}$ is a subsemigroup of the cartesian product $X\times B$. When $h(x)=0$ we obtain $R_0=\{(0,0),(1,0)\}$ which is itself a subsemigroup of $R_h=\{(0,0),(1,0),(1,1)\}$ and isomorphic to $X$,
\[X\cong R_0\hookrightarrow R_h\hookrightarrow R_1=X\times B.\]
There is one more aspect in which the homomorphism $h=pk$ makes an unexpected appearance when compared with the situation in monoids. That is the formula (\ref{eq:semibi product of monoids}) has to be modified to become
 \begin{equation}\label{eq:semibi product of semigroups}
  (x,b)+(x',b')=((x+(b\cdot x')+((b+h(x'))\times b'))^{b+b'},b+b').
  \end{equation}

We have seen  that in every semibiproduct of semigroups such as in (\ref{diag: semi-biproduct of semigroups}), the semigroup $A$ is isomorphic to the set $R=R_{h,\rho,\varphi,\gamma}$ defined as in (\ref{eq:R rho h}) with the binary operation (\ref{eq:semibi product of semigroups}).
However, the information carried out by a pseudo-action of semigroups, in the sense of a homomorphism $h\colon{X\to B}$ together with a correction system $\rho\colon{X\times B\to X}$, a pre-action $\varphi\colon{B\times X\to X}$ and a factor system $\gamma\colon{B\times B\to X}$, such that $(R_{h,\rho,\varphi,\gamma},+)$ is a semigroup, is by itself not sufficient to recover the complete structure on the  semibiproduct diagram. Indeed, the map $t=qs$ is missing.

\section{Conclusion}

In this paper we have given the first steps towards a  theory of semibiproducts by mimicking the classical theory of biproducts in \emph{Ab}-categories. In particular we have seen how it can be applied in the study of extensions even though null objects are not present. However, the theory can be applied in other situations as well. For example in the study of preordered groups and preordered monoids \cite{Preord,NMFMS}  instead of maps we can take monotone maps whereas in the study of topological semigroups \cite{Ganci} we can take continuous maps. Classical algebras, Lie algebras, Hopf algebras and similar structures can be analysed too.

\section*{Acknowledgements}



This work is supported by Fundação para a Ciência e a Tecnologia FCT/MCTES (PIDDAC) through the following Projects:  Associate Laboratory ARISE LA/P/\-0112/2020; UIDP/04044/2020; UIDB/04\-044/2020; PAMI - ROTEIRO/0328/2013 (Nº 022158); MATIS (CEN\-TRO-01-0145-FEDER-000014 - 3362); Generative.Thermodynamic; by CDRSP and ESTG from the Polytechnic Institute of Leiria.

\end{document}